\theoremstyle{plain}
\newtheorem{theorem}{Theorem}[section]
\newtheorem{corollary}[theorem]{Corollary}
\newtheorem{lemma}[theorem]{Lemma}
\theoremstyle{definition}
\newtheorem{definition}[theorem]{Definition}
\newtheorem{remark}{Remark}
\newtheorem{example}{Example}
\title[Cohomology ring of non-compact abelian arrangements]{Cohomology ring of non-compact abelian arrangements}
\author{Evienia Bazzocchi}
\address{
    Evienia Bazzocchi 
}
\email{evienia.bazzocchi2@unibo.it}
\author{Roberto Pagaria}
\address{
    Roberto Pagaria \newline \indent
    Dipartimento di Matematica \newline \indent
    Universit\`a di Bologna \newline \indent
    Bologna, BO, 40126, Italia
}
\email{roberto.pagaria@unibo.it}
\author{Maddalena Pismataro}
\address{
    Maddalena Pismataro
}
\email{maddalena.pismataro2@unibo.it}
\date{\today}
\DeclareMathOperator{\PD}{PD}
\DeclareMathOperator{\BM}{BM}
\DeclareMathOperator{\dime}{dim}
\DeclareMathOperator{\conf}{Conf}
\DeclareMathOperator{\rk}{rk}
\DeclareMathOperator{\sgn}{sgn}
\DeclareMathOperator{\res}{res}
\DeclareMathOperator{\gr}{gr}
\DeclareMathOperator{\Hom}{Hom}
\newcommand{\AAA}{\mathcal{A}}
\newcommand{\LL}{\mathcal{L}}
\newcommand{\RR}{\mathbb{R}}
\date{\today}
\begin{document}

\begin{abstract}
    We give a Orlik-Solomon type presentation for the cohomology ring of arrangements in a non-compact abelian Lie group.
    The new insight consists in comparing arrangements in different abelian groups. Our work is based on the Varchenko-Gel'fand ring for real hyperplane arrangements and from that we deduce the cohomology rings of all other abelian arrangements.
    As by-product, we obtain a new proof of the Orlik-Solomon relations and De Concini-Procesi ones.
\end{abstract}

\subjclass{14N20 (primary), 52C35, 55R80 (secondary)}

\maketitle

\section{Introduction}

The topology of hyperplane arrangements is a classical subject whose study goes back to the sixties.
It is originally motivated by the relations with braid groups \cite{Deligne1972} and configuration spaces \cite{CLM76}.
The study of arrangements was also motivated by combinatorics, in particular matroid theory, and by singularity theory.
%In the last few years, the new development in matroid theory renewed the interest in the topology of arrangements and their compactifications: June Huh proved inequalities for invariants of realizable matroids by using properties of hyperplane arrangements \cite{Huh12}, that were later generalized to all matroids \cite{AHK18}.
%Recently, hyperplane arrangements were used as local model in algebraic geometry, especially in moduli problems \cite{dCHM21,MMP23}.

A first generalization of hyperplane arrangements -- now called toric arrangements -- were introduced in \cite{Looijenga}.
In the 2000s, De Concini and Procesi \cite{DeConciniProcesi2005} started a systematic study of toric arrangements motivated by the application to knapsack problem.
Finally, abelian arrangements were introduced in 2016 by Bibby \cite{Bibby}.

An important focus on this subject is the description of the cohomology ring of the arrangement complement. Arnold identified relations between differential forms in the cohomology ring of braid hyperplane arrangements \cite{Arnold69} and Brieskorn described the cohomology ring of arrangements of Coxeter type \cite{Brieskorn1971-1972}.
A milestone is the famous result by Orlik and Solomon \cite{OrlikSolomon1980}: they presented the cohomology ring of the complement of complex hyperplane arrangements by generators and relations.
Later, Varchenko and Gel'fand studied the cohomology ring of real hyperplane arrangements using Heaviside functions as generators \cite{VarchenkoGelfand87}.
They observed an analogy between the real and the complex case:
\begin{quote}
The relation \eqref{rel} \footnote{Equation \eqref{rel} of this article corresponds to relation (2) in \cite{VarchenkoGelfand87}} is the even analogue of a relation of Orlik and Solomon \cite{OrlikSolomon1980} for differential forms.
\end{quote}
However, they do not find a geometric connection between the two cases.

The cohomology of subspace arrangements was determined in \cite{goresky} as a module and a rational model was provided in \cite{DCP95}.
The multiplicative structure of the integral cohomology ring was later provided in \cite{FeichtnerZiegler} and in \cite{LonguevilleSchultz}.

Proudfoot \cite{Proudfoot06} relates the VG-algebra and the OS-algebra with $\mathbb{F}_2$-coefficients  overcoming the non-commutativity of the OS algebra.
Moseley \cite{Moseley17} shows an isomorphism between the VG-algebra and the cohomology of (subspace) arrangements in $(\mathbb{R}^3)^n$.
The Varchenko-Gel'fand presentation was generalized to oriented matroids and conditional oriented matroids \cite{GR89,Cordovil02,DorpalenBarry23,DBPW22}.

A Orlik-Solomon type presentation for toric arrangements was obtained by De Concini and Procesi \cite{DeConciniProcesi2005} in the unimodular case and later extended to all toric arrangements \cite{CD17,CDDMP2020}.
The cohomology of the complement can be also computed, via the Morgan model, from the cohomology of a wonderful compactification and their strata.
This approach, for toric arrangements, was developed in a sequence of articles \cite{DCG18,DCG19,DCGP20,DCG21,MociPagaria,GPS2024}.

In the more general case of abelian arrangements only few results are known: 
Bibby computed the Euler characteristic in the case of complex groups \cite{Bibby} and Liu, Tran, and Yoshinaga described the additive structure of the cohomology in the case of non-compact abelian groups \cite{LiuTranYoshinaga2021}.

Our main result is a new and uniform approach to the cohomology ring of non-compact abelian arrangements.
Our technique consists in the pushforward of the cohomological relations from the real hyperplane case (see \cite{VarchenkoGelfand87}) to the general non-compact abelian arrangement.
The main theorem (\Cref{thm: main}) is an Orlik-Solomon type presentation of the cohomology ring of non-compact abelian arrangements.

We obtain a new proof of the Orlik-Solomon result \cite{OrlikSolomon1980} for complexified hyperplane arrangements and of the De Concini-Procesi presentation \cite{DeConciniProcesi2005} for unimodular toric arrangements.
We also obtain a new proof of the presentation of the rational cohomology ring for general toric arrangements, that firstly appeared in \cite{CDDMP2020}.
%The derivation, in \cite{CDDMP2020}, of the presentation from unimodular toric arrangements to non-unimodular ones is extended in this article to all abelian arrangements.
In all other cases our result is original.

We start by setting the notations in \Cref{Sec:2}.
In \Cref{Sec:3}, we explicitly describe the pushforward along the map $i\colon M^\mathbb{R}(\mathcal{A}) \to M^{a,b}(\mathcal{A})$ induced by the inclusion $\mathbb{R} \hookrightarrow \mathbb{R}^b \times (S^1)^a$.
We manipulate the Varchenko-Gel'fand relations to obtain relations in the cohomology ring of central and unimodular arrangements (see Lemma \ref{lemma2}).
\Cref{Sec:4} extends the results of \cite{LiuTranYoshinaga2021} to the case of non-central arrangements (see \Cref{thm: nonC}).
Then, in \Cref{Sec:5} we use the technique of separating coverings developed in \cite{CDDMP2020} to obtain the main theorem.
In \Cref{sec:6} we consider toric arrangements and we compare the presentation in \cite{CDDMP2020} with our own; we also show how to derive the former from the latter.
Finally, in \Cref{Sec: conf} we apply our theorem to the arrangements of type $A_{n}$ improving a result of \cite{CohenTaylor1978} on configuration spaces in $\mathbb{R}^b \times (S^1)^a$.

\section{Definitions and notations}\label{Sec:2}
Consider the abelian group $G=\mathbb{R}^b\times (S^1)^a$ canonically oriented and let $e=(0,\dots,0,1,\dots, 1)$ be the unit of $G$. We denote $d:=\dime(G)-1= a+b-1$.

Let $\Lambda\cong \mathbb{Z}^r$ be a lattice with the choice of an orientation, so that $\Hom (\Lambda,G) \simeq G^r$ has a natural orientation.
A primitive character $\chi\in \Lambda$ corresponds to a group morphism $\chi \colon \Hom (\Lambda,G)\rightarrow G$.
Let $E$ be a finite set with a total ordering.
An abelian arrangement is a finite collection of connected subvarieties in $\Hom (\Lambda,G)$ defined by a finite list of primitive characters $\chi_i$ and elements $g_i \in G$ indexed by $E$: 
\[\mathcal{A}=\{H_i:=\chi_i^{-1}(g_i)\}_{i\in E}.\]
%This definition, specialized to the case $G=\mathbb{C}$, corresponds to the definition of complexified hyperplane arrangements in the central case; in the non central case we allow also complex translations. 
This definition, specialized to the case $G=\mathbb{C}$, corresponds to hyperplane arrangements with the property that each hyperplane has a normal vector with rational coordinates. 

The \emph{poset of layers} $\mathcal{L}(\mathcal{A})$ (or \emph{intersection poset}) of $\mathcal{A}$ is the set of all connected components of intersections of elements in $\mathcal{A}$, ordered by reverse inclusion.
An arrangement is called \emph{central} if the intersection of all the subvarieties is non-empty, and it is called \emph{unimodular} whenever all the possible intersections of elements in $\mathcal{A}$ 
is either connected or empty.

The \emph{complement} of the arrangement is:
$$M(\mathcal{A})=\Hom (\Lambda,G)\setminus \bigcup_{i\in E}H_i.$$
We also denote $M(\mathcal{A})$ by $M^{a,b}(\mathcal{A})$, when we need to specify what abelian group $G=\mathbb{R}^b\times (S^1)^a$ we are working with.
When $G=\mathbb{R}$, we write $M^{\mathbb{R}}(\mathcal{A})$ instead of $M^{0,1}(\mathcal{A})$.

Central abelian arrangements are related to matroids, oriented matroids, and arithmetic matroids, we briefly recall definitions and terminologies.
For further details we refer to the standard textbooks \cite{OxleyMatroidTheory,OrientedMatroids,AndersonOrientedMatroids} for oriented matroids, to the articles \cite{DAdderioMoci,BradenMoci} for arithmetic matroids and to \cite{OrientableArithmeticMatroids,PagariaPaolini} for the interaction between oriented and arithmetic matroids. 

Consider a central abelian arrangement $\mathcal{A}=\{H_i=\chi_i^{-1}(e)\}_{i\in E}$, the collection of $\chi_i \in \Lambda$ define a realizable, oriented and arithmetic matroid in the following way.
A \emph{matroid} $(E, \rk )$ is a pair of a groundset $E$ and a \emph{rank function} $\rk \colon 2^E \to \mathbb{N}$ satisfying the three properties:
\begin{enumerate}[label=(R\arabic*)]
    \item $\rk(A) \leq \lvert A \rvert$ for all $A \subseteq E$,
    \item (increasing) $ \rk(A) \leq \rk(B)$ for all $A \subseteq B \subseteq E$,
    \item (submodular) $ \rk(A) + \rk(B) \geq \rk(A\cap B) +\rk (A \cup B)$ for all $A, B \subseteq E$.
\end{enumerate}

For any central arrangement $\mathcal{A}$, consider the rank function $\rk_\mathcal{A}(A)= \rk_{\mathbb{Z}} \langle \chi_i \mid i \in A \rangle$ for any $A \subseteq E$; the pair $(E, \rk_\AAA) $ is a matroid.
The real codimension of the intersection $\cap_{i \in A} H_i$ is equal to $(a+b)\rk_\AAA(A)$.
A subset $A$ is \emph{independent} if $\rk(A)=\lvert A \rvert$, \emph{dependent} if $\rk(A)<|A|$, and a \emph{basis} if $\rk(A)=\lvert A \rvert = \rk(E)$.
A subset $B$ is dependent on $A \subseteq E$ if $\rk (A \cup B)=\rk(A)$.
A set $C \subseteq E$ is a \emph{circuit} if it is a minimally dependent set, i.e.\ $\rk(C)=\lvert C \rvert -1$ and for all $A \subsetneq C$ $\rk(A)=\lvert A \rvert$.
A subset $A\subseteq \mathcal{A}$ has \emph{nullity} $1$ if $\rk(A)=\lvert A \rvert -1$ and, in this case, we denote by $C(A)$ the unique circuit contained in $A$.

An \emph{arithmetic matroid} is a triple $(E,\rk,m)$ where $(E,\rk)$ is a matroid and the \emph{multiplicity function} $m \colon 2^E \to \mathbb{N}_{>0}$ satisfies:
\begin{enumerate}[label=(AM\arabic*)]
    \item $m(A \cup \{i\}) \mid m(A)$ for all $A \subseteq E$ and all $i \in E$ dependent on $A$,
    \item $m(A) \mid m(A \cup \{i\})$ for all $A \subseteq E$ and all $i \in E$ independent on $A$,
    \item \label{item:AM3}
    for all subsets $A \subseteq B \subseteq E$ such that $B$ is a disjoint union $B = A \sqcup F \sqcup T$ satisfying $\rk(C) = \rk(A) + \lvert C \cap F \rvert$ for all $A \subseteq C \subseteq B$, then
\[ m(A) m(B) = m(A \cup F ) m(A \cup T ),\]
\item for all $A \subseteq B \subseteq E$ with $\rk(A)=\rk(B)$, then
\[ \sum_{A \subseteq C \subseteq B} (-1)^{\lvert C \setminus A \rvert} m(C) \geq 0,\]
\item  for all $A \subseteq B \subseteq E$ with $\lvert A \rvert + \rk( E \setminus A)= \lvert B \rvert + \rk(E\setminus B)$, then
\[ \sum_{A \subseteq C \subseteq B} (-1)^{\lvert C \setminus A \rvert} m(E \setminus C) \geq 0.\]
\end{enumerate}

The function $m_\AAA \colon 2^E \to \mathbb{N}_{>0}$ defined by $m_\AAA(A)= \# \operatorname{tor} \Lambda/\langle \chi_i \mid i \in A \rangle$, i.e. the cardinality of the torsion of the quotient $\Lambda/\langle \chi_i \mid i \in A \rangle$ endows the matroid $(E,\rk_\AAA)$ of an arithmetic matroid structure.
Since the arrangement $\mathcal{A}$ is central, the number of connected components of the intersection $\cap_{i \in A} H_i$ is equal to $m_\AAA(A)^a$.
In particular, for $a=0$ all intersections are connected.
A subset $A \subseteq E$ is \emph{unimodular} if $m(A)=1$. %and an arrangement is \emph{unimodular} if every subset $A \subseteq E$ is unimodular.

The above definitions do not change if we change the description of some $H_i$ replacing $\chi_i$ with $-\chi_i$.
Now we fix for each subvariety $H_i$ a choice of a character $\chi_i$, the construction of the oriented matroid will depend on these choices.

A signed subset $(C^+,C^-)$ of $E$ is a pair of disjoint subset of $E$ and we denote its support by $C= C^+ \sqcup C^-$.
An \emph{oriented matroid} is a pair $(E, \mathcal{C})$ of a groundset $E$ and a collection of signed subsets of $E$ such that:
\begin{enumerate}[start=0, label=(C\arabic*)]
    \item $(\emptyset,\emptyset) \not \in \mathcal{C}$,
    \item (symmetry) if $(C^+,C^-) \in \mathcal{C}$, then $(C^-,C^+) \in \mathcal{C}$,
    \item (incomparability) for all $(C^+,C^-), (D^+,D^-) \in \mathcal{C}$ such that $C \subseteq D$ then $(C^+,C^-)= (D^+,D^-)$ or $(C^+,C^-)= (D^-,D^+)$,
    \item (circuit elimination) for all $(C^+,C^-), (D^+,D^-)$ such that $(C^+,C^-) \neq (D^-,D^+)$ and for all $i \in C^+ \cap D^-$ there exists $(B^+,B^-) \in \mathcal{C}$ such that $B^+ \subseteq (C^+ \cup D^+) \setminus \{i\}$ and $B^- \subseteq (C^- \cup D^-) \setminus \{i\}$.
\end{enumerate}
We call \emph{oriented circuits} the signed subsets of an oriented matroid. 
The set $\{ C\}_{(C^+,C^-) \in \mathcal{C}}$ is the set of circuits of a matroid and such a matroid is unique.

Consider, for each minimal linear relation $\sum_{i \in C} n_i \chi_i =0 $ among characters of $\AAA$, the sets $C^+=\{i \in E \mid n_i>0\}$ and the set $C^-=\{i \in E \mid n_i<0\}$.
The collection $\mathcal{C}_\AAA$ of all such pairs $(C^+,C^-)$ defines an oriented matroid.
Notice that for each signed circuit $(C^+,C^-)$ the linear relation is unique up to scalar multiplication, so also the pair $(C^-,C^+)$ is a signed circuit and we call it the opposite circuit of $(C^+,C^-)$.
%The collection of signed circuits for $\mathcal{A}$ forms an \emph{oriented matroid}.
For the sake of notation we will denote an oriented circuit by $C=C^+\sqcup C^-$.

An alternative definition of oriented matroids is given by basis orientations (also called chirotopes): let, for each basis $B \subseteq E$, $\sgn (B)$ the sign of the basis $(b_1, b_2, \dots, b_r)$ of $\Lambda \otimes_\mathbb{Z} \mathbb{Q}$, where we take the element of $B=\{b_1, \dots, b_r\}$ in the order prescribed by the ordered set $E$.
The collection of $\sgn(B)$ for all basis $B$ of the matroid determines the oriented matroid.
Changing the orientation of $\Lambda$ reverses all signs $\sgn(B)$, but does not affect the oriented matroid.

The situation for non-central arrangements is different.
Let $\mathcal{A}=\{H_i=\chi_i^{-1}(g_i)\}$ be an abelian arrangement, the collection of characters $\chi_i$ for $i \in E$ defines an oriented arithmetic matroid as in the central case.
For $A \subseteq E$ if the intersection $\cap_{i \in I} H_i$ is non-empty, then it has codimension $(a+b)\lvert A \rvert$ and $m(A)^a$ connected components.
If $A$ is an independent set, the intersection is certainly non-empty. Otherwise, it depends on the elements $g_i$ for $i \in A$.
We say that a dependent set is \emph{central} if the corresponding intersection is non-empty.

\medskip
In order to define certain cohomological classes in $H^{a+b-1}(G\setminus \{e\})$ we consider the immersion of a small sphere $i\colon S^{a+b-1}\hookrightarrow G\setminus e$ centered in $e$. Let $W$ be the top dimensional class generating $H_{a+b-1}(S^{a+b-1})$. We denote by $\omega$ the class in $H^{a+b-1}(G\setminus \{e\})$ dual to $i_*(W)$ with the orientation given by the outer normal first rule.

%The torus contributes with classes of degree at most $a$, so we are interested in the case $b=1$.
For $j=1,\cdots, a$, let $Y^j$ be the standard generators of $H_1(S^1)$ and let $Y$ be the top class in $H_a((S^1)^a)$. For $x\in \mathbb{R}^b\setminus\{0\}$ we consider the immersion
\begin{align*}
i_x \colon \quad (S^1)^a  &\hookrightarrow G\setminus\{e\}
%=(\mathbb{R}^b \times (S^1)^a)\setminus (0,1,\cdots,1)
\\
z &\mapsto (x,z),
\end{align*}
Up to homotopy, $i_x$ only depends on the connected component of $\mathbb{R}^b \setminus \{0\}$ the point $x$ belongs to. 
In the case $b=1$ we choose $x<0$, otherwise the choice of the point does not matter.
Denote by $\psi^j$ the class in $H^1(G\setminus\{e\})$ dual to $(i_x)_*(Y^j)$  and by $\psi$ the class in $H^a(G\setminus\{e\})$ dual to $(i_x)_*(Y)$. 

For any $i\in E$, the map $\chi_i$ induces $(\chi_i-g_i)|_{M^{a,b}(\mathcal{A})}\colon M^{a,b}(\mathcal{A})\rightarrow G\setminus\{e\}$. So we set
\begin{gather*}
    \omega_i=(\chi_i-g_i|_{M^{a,b}(\mathcal{A})})^*(\omega)\in H^{a+b-1}(M^{a,b}(\mathcal{A})), \\
    \psi_i^j=(\chi_i|_{M^{a,b}_{\mathcal{A}}})^*(\psi^j)\in H^1(M^{a,b}(\mathcal{A})), \\ 
    \psi_i=(\chi_i|_{M^{a,b}_{\mathcal{A}}})^*(\psi)\in H^{a}(M^{a,b}(\mathcal{A})).
\end{gather*}
We have $\omega_i\psi_i^j=0$ for any $i \in E$ and any $j \leq a$ because $\omega\psi^j=0$ in $H^*(G;\mathbb{Z})$.
In particular, $\omega_iz =0$ for any $z \in \ker \left( H^*(G^r;\mathbb{Z}) \to H^*(H_i;\mathbb{Z}) \right)$.
Note that, when $b=1$, the classes $\psi_i$'s have degree $a=a+b-1$, the same degree of the elements $\omega_i$'s.

\section{Central and unimodular arrangements}\label{Sec:3}

Consider a proper map $f\colon N \to M$ between oriented manifolds. 
Such a map induces a pushforward map in cohomology $f_*$ obtained from the pushforward in Borel-Moore homology (for a general reference see \cite{HughesRanickiBook}) by composing with the Poincaré duality isomorphism.
\begin{center}
    \begin{tikzcd}
H^k(N) \arrow[d, "\PD_N"'] \arrow[r, "f_*"] & H^{\dim M - \dim N + k}(M)               \\
H^{\BM}_{\dim N - k}(N) \arrow[r, "f_*^{\BM}"']   & H^{\BM}_{\dim N - k}(M) \arrow[u, "\PD^{-1}_M"']
\end{tikzcd}
\end{center}
The map $f_*$ increases the degree by $\dim M- \dim N$ and has the following properties:
\begin{enumerate}[label=(P\arabic*)]
    \item (Functoriality) $f_* \circ g_*=(f \circ g)_*$.
    \item (Projection formula) $f_*( f^*(y) \smile x) = y \smile f_*(x) $ for any $x \in H^*(N)$ and any $y \in H^*(M)$.
    \item \label{item:natrurality_pushforward} (Naturality) For any pullback diagram
    \begin{center}
        \begin{tikzcd}
N' \arrow[d, "f'"'] \arrow[r, "h"] & N \arrow[d, "f"] \\
M' \arrow[r, "g"']                 & M               
\end{tikzcd}
    \end{center}
    with $f$ and $f'$ proper maps and $\dim M - \dim N = \dim M'- \dim N'$, we have
    \[ g^* \circ f_* = f'_* \circ h^*.\]
    \item \label{item:prop_4_pushforward} (Embedding) If $f$ is a closed embedding then $f_*$ is the composition of the Thom isomorphism for the normal bundle
    \[ f_* \colon H^*(N) \xrightarrow{\operatorname{Th}} H^{*+d}(T, T \setminus N) \to H^{*+d}(M, M \setminus N) \to H^{*+d}(M) \]
    where we identify the normal bundle with a tubular neighborhood  $N \subseteq T \subseteq M$ and $d=\dim M - \dim N$.
    The second map is provided by excision theorem of $M\setminus T$ and the last one is the map from the long exact sequence of the pair $(M,M \setminus N)$.
\end{enumerate}

We consider the pushforward in cohomology $i_*\colon H^0(\mathbb{R}\setminus\{0\})\rightarrow H^{a+b-1}(G\setminus\{e\})$ induced by the closed inclusion $i\colon\mathbb{R}\setminus\{0\}\rightarrow G\setminus\{e\}$, $i(x)=(x,0,\cdots,0,1,\cdots,1)$.
Let $w^+$ and $w^-$ be the two standard generators of $H^0(\mathbb{R}\setminus \{0\})$.
Poincaré Duality 
%for Borel-Moore homology 
maps $w^+$ and $w^-$ to the classes in $H^{\BM}_1(\mathbb{R}\setminus\{0\})$ represented by the infinite chains $c^+$ and $c^-$, corresponding respectively to the positive and negative semi-axes.
%We denote by $\gamma^+$ and $\gamma^-$ their image through $j_*$.
By checking intersection numbers, we obtain $\PD(\omega)= i_*c^+$ and, when $b=1$, $\PD(\psi)= i_*c^+ + i_*c^- =i_*1$. Note that, if $b>1$, $i_*c^+ + i_*c^-=0$ in $H^{\BM}_1(G\setminus\{e\})$.
Summarizing:
\begin{equation}
\label{eq:pushforward_i}
i_*(w^+)=\omega, \quad i_*(w^-)=
\begin{cases}\psi-\omega &\text{if }b=1,\\
-\omega &\text{otherwise}
\end{cases},
\quad
i_*(1)= \begin{cases}\psi &\text{if }b=1,\\
0 &\text{otherwise.}
\end{cases}    
\end{equation}

\begin{figure}
    \centering
\begin{tikzpicture}
    \draw[black, thin] (-4, -2) rectangle (4,2);
    \draw[postaction={decorate}, thick,decoration={
    markings,
    mark=at position 0.5 with {\arrow{>}}}] (0,0) -- (2,0);
	\draw[postaction={decorate}, thick,decoration={
    markings,
    mark=at position 0.5 with {\arrow{>}}}] (2,0) -- (4,0);
    \node [cross out,draw,minimum size=2pt] at (0,0){};
    \node [cross out,draw,minimum size=2pt] at (2,0){};
     \draw[red, very thick,
        decoration={markings, mark=at position 0.125 with {\arrow{>}}},
        postaction={decorate}]
        (0,0) circle (0.7);
    \draw[blue, very thick,
        decoration={markings, mark=at position 0.125 with {\arrow{>}}},
        postaction={decorate}]
        (2,0) circle (0.5);
    \node at (0,1) {\textcolor{red}{$Y$}};
    \node at (2,1) {\textcolor{blue}{$W$}};
    \node at (1.1,-0.3) {$i_{*}c^{-}$};
    \node at (3.2,-0.3) {$i_{*}c^{+}$};
    \node at (-2.5,1.3) {$(\mathbb{R}\times S^{1})\setminus(0,1)$};    
\end{tikzpicture}
    \caption{In the case $(a,b)=(1,1)$, the homology class $\color{red}Y$ is represented in red and $\color{blue}W$ in blue. The two arrows represent the support of Borel Moore homology classes $i_* c^+, i_*c^- \in H_1^{\BM}(\mathbb{R} \setminus \{0\})$.}
    \label{fig:pushforward}
\end{figure}
%\begin{remark}
%    Consider a central arrangement $\mathcal{A}$, the linear relations among the elements $\chi_i \in \Lambda \simeq \mathbb{Z}^r$ define a representable matroid over $\mathbb{Q}$.
%    For any subvariety $H_i= \ker (\chi_i)=\ker (-\chi_i)$ we choose one of the two descriptions. These choices determine an oriented matroid.
%
%    We will denote the \emph{circuits} of the matroid by $C$ and the \emph{oriented circuits} by $C=C^+ \sqcup C^-$.
%\end{remark}

Now we consider $\mathcal{A}$ as a real hyperplane arrangement, for any $i\in E$ let $w_i^+=\chi_i^{*}(w^+)$ and $w_i^-=\chi_i^{*}(w^-)$ be the classes in $H^0(M^{\mathbb{R}}(\mathcal{A}))$ corresponding to the positive and negative half spaces.
%inverse image of $\mathbb{R}_+$ and $\mathbb{R}_-$ through $\chi_i\colon \mathbb{R}^r\rightarrow \mathbb{R}$.

\begin{theorem}[{\cite[Theorem 5]{VarchenkoGelfand87}}]
Let $\AAA$ be a central real hyperplane arrangement.
The ring $H^0(M^{\mathbb{R}}(\mathcal{A}))$ is generated by the classes $w_i^+, w_i^-$ with $i\in E$ and subject to the following relations:
\begin{itemize}
    \item $w_i^-=1-w_i^+$; 
    \item $w_i^+w_i^-=0$;
    \item for any oriented circuit $C = C^{+} \sqcup C^{-}$, $\prod_{i\in C^{+}} w_i^+ \prod_{j\in C^{-}} w_j^-=0$.
\end{itemize}
\end{theorem}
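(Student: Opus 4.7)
The idea is to identify $H^0(M^{\mathbb{R}}(\mathcal{A}))$ with a ring of functions on chambers, verify the relations geometrically, and match generators to chambers via oriented-matroid combinatorics.

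First I would observe that $H^0(M^{\mathbb{R}}(\mathcal{A}))$ is the ring of locally constant $\mathbb{Z}$-valued functions on the complement, hence canonically isomorphic to $\mathbb{Z}^{\mathcal{C}(\mathcal{A})}$, where $\mathcal{C}(\mathcal{A})$ denotes the set of chambers (connected components). Under this identification $w_i^+$ is the indicator of those chambers on which $\chi_i>0$, and $w_i^-$ the indicator on $\chi_i<0$. The first two relations are then obvious. For the third, given an oriented circuit $C=C^+\sqcup C^-$ there is a strict positive dependence $\sum_{i\in C^+} a_i\chi_i = \sum_{j\in C^-} a_j\chi_j$ with $a_i,a_j>0$; no point of $\mathbb{R}^r$ can simultaneously satisfy $\chi_i>0$ for all $i\in C^+$ and $\chi_j<0$ for all $j\in C^-$, so the corresponding product of indicators vanishes.

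Next, I would let $R$ denote the abstract ring on generators $w_i^{\pm}$ modulo the stated relations, and let $\Phi\colon R\to\mathbb{Z}^{\mathcal{C}(\mathcal{A})}$ be the induced ring morphism. Surjectivity is immediate: each chamber $\Gamma$ determines a sign vector $\sigma_\Gamma\in\{+,-\}^E$ whose indicator is $\prod_i w_i^{\sigma_\Gamma(i)}$, and these indicators span the target. For injectivity one uses $w_i^-=1-w_i^+$ together with the idempotence $(w_i^+)^2=w_i^+$ (a consequence of $w_i^+w_i^-=0$) to reduce $R$ to the $\mathbb{Z}$-span of the products $e_\sigma:=\prod_{i\in E} w_i^{\sigma(i)}$ for $\sigma\in\{+,-\}^E$, taken modulo the circuit relations. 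It then suffices to show this quotient is free of rank $|\mathcal{C}(\mathcal{A})|$ with basis $\{e_{\sigma_\Gamma}\}_{\Gamma\in\mathcal{C}(\mathcal{A})}$.

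The principal obstacle is completeness of the circuit relations: one must prove that $e_\sigma=0$ in $R$ whenever $\sigma$ is \emph{not} the sign vector of any chamber. This is the oriented-matroid Farkas lemma: non-realizability of $\sigma$ is equivalent to the existence of an oriented circuit $C^+\sqcup C^-$ with $C^+\subseteq\sigma^{-1}(+)$ and $C^-\subseteq\sigma^{-1}(-)$, and multiplying the corresponding circuit relation by the remaining factors $w_k^{\sigma(k)}$ for $k\notin C$ immediately forces $e_\sigma=0$. Combined with the linear independence of $\{e_{\sigma_\Gamma}\}$ established by evaluation under $\Phi$, this exhibits $\Phi$ as an isomorphism and completes the proof.
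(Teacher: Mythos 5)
The paper states this theorem as a direct citation of Varchenko and Gelfand \cite{VarchenkoGelfand87} and does not reproduce a proof, so there is no argument in the text to compare yours against; I can only evaluate yours on its own terms. Your argument is correct and is the standard combinatorial route. The identification of $H^0(M^{\mathbb{R}}(\mathcal{A}))$ with locally constant $\mathbb{Z}$-valued functions on the chamber set, the reading of $w_i^{\pm}$ as half-space indicators (which matches the paper's definition $w_i^{\pm}=\chi_i^*(w^{\pm})$ with $w^{\pm}$ the indicators of the semi-axes), and the reduction of the abstract ring $R$ via $w_i^-=1-w_i^+$ and idempotence to the $\mathbb{Z}$-span of the atoms $e_\sigma$ are all sound. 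The one genuinely nontrivial ingredient is the oriented-matroid Farkas theorem (in the realizable case, Gordan or Stiemke): a full sign vector $\sigma\in\{+,-\}^E$ is not the sign vector of a chamber precisely when there is a circuit $C=C^+\sqcup C^-$ with $C^+\subseteq\sigma^{-1}(+)$ and $C^-\subseteq\sigma^{-1}(-)$. For this to account for every nonrealizable $\sigma$, both reorientations $\pm C$ of each underlying matroid circuit must count among the ``oriented circuits''; that is indeed the convention in force in the paper, which pairs $C$ and $-C$ in deriving relation \eqref{rel}. Granting the Farkas step, each nonrealizable $e_\sigma$ dies in $R$, the realizable ones map under $\Phi$ to the standard basis of $\mathbb{Z}^{\mathcal{C}(\mathcal{A})}$, and $\Phi$ is an isomorphism as you claim.
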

We will write, for $I\subseteq E$, $\omega_I^+= \prod_{i \in I}\omega_i^+$ and $\omega_I^-\prod_{i \in I}\omega_i^-$.
Similarly, $\omega_I=\omega_{i_1}\omega_{i_2} \dots \omega_{i_k}$ and $\psi_I=\psi_{i_1}\psi_{i_2} \dots \psi_{i_k}$ where $I=\{i_1,i_2, \dots, i_k\}$ and the product is taken in the order induced by the ordered ground-set $E$.
\begin{remark}
    From the first and second relations it follows that $(w_i^\pm)^2=w_i^\pm$. 
    Consider a central circuit $C$,
    %so that also the opposite $-C$ is a circuit.
    using the first and the third relations for $C$, we get
    \begin{align}
    0=\prod_{i\in C^{+}} w_i^+ \prod_{j\in C^{-}} (1-w_j^+) = \sum_{J\subseteq C^{-}} (-1)^{|J|} w_{C^+}^+ w^+_J=\sum_{J\subseteq C^{-}}(-1)^{|J|}w^+_{C^+\sqcup J}
    \end{align}
    and for the opposite circuit
    \begin{align}
    0=\prod_{i\in C^{-}} w_i^+ \prod_{j\in C^{+}} (1-w_j^+) = \sum_{J\subseteq C^{+}} (-1)^{|J|} w_{J}^+ w^+_{C^-}=\sum_{J\subseteq C^{+}}(-1)^{|J|}w^+_{J\sqcup C^-}.
    \end{align}
    By summing the equations above with a suitable sign, we get the following relations:
    \begin{align*}
        \sum_{J\subsetneq C^{-}}(-1)^{|J|}w^+_{C^+\sqcup J}-(-1)^{|C|}\sum_{J\subsetneq C^{+}}(-1)^{|J|}w^+_{J\sqcup C^-}=0,
    \end{align*}
    \begin{align}\label{rel}
        \sum_{\substack{K\subseteq C^{-}\\ K\neq \emptyset}}(-1)^{|K|}w^+_{C\setminus K}-\sum_{\substack{K\subseteq C^{+}\\ K\neq \emptyset}}(-1)^{|K|}w^+_{C\setminus K}=0.
    \end{align}
    From this identities, we will obtain a relation in $H^{*}(M^{a,b}(\mathcal{A}))$ by applying the pushforward map $i_*$.
\end{remark}

\begin{definition}
    \label{def:sgn_ell}
    Let $A,B$ be disjoint subsets of $E$. We denote by $\ell(A,B)$ the sign of the permutation taking the $A\sqcup B$ as a ordered subset of $E$ of $\mathcal{A}$ to the concatenation of $A$ and $B$. 
\end{definition}

\begin{definition}
Let $A$ and $B$ be disjoint subsets of $E$ and define
    \[\eta_{A,B}:=(-1)^{d\ell(A,B)}\omega_A\psi_B \in H^{*}(M^{a,b}(\mathcal{A})).
    \]
\end{definition}

%Let $B \subseteq E$ a basis of the matroid, we denote by $\sgn(B)$ the sign of the determinant of the matrix whose columns represent the elements $\chi_b$ (in a positive basis of $\Lambda$) with the order induced by $E$.

\begin{remark}
    \label{rmk:independece_of_subarr}
    Let $\mathcal{B} \subset \AAA$ be a sub-arrangement and $j\colon M^{a,b}_\AAA \hookrightarrow M^{a,b}_\mathcal{B}$ be the inclusion of the complements.
    From the definitions it follows that that $j^*\eta^{\mathcal{B}}_{C,D}= \eta_{C,D}^\AAA$.
    So we will not specify the ambient space.
    Since $\omega_A z =0$ for any $z \in \ker \left( H^*(G^r;\mathbb{Z}) \to H^*(\cap_{i \in C}H_i;\mathbb{Z}) \right)$, we have $\eta_{C,D}=0$ if $D$ is dependent on $C$.
\end{remark}

\begin{lemma}\label{lemma1}
    Let $\mathcal{A}$ be a unimodular central arrangement of rank $r$. Consider the closed immersion $i\colon M^{\mathbb{R}}(\mathcal{A})\rightarrow M^{a,b}(\mathcal{A})$ and its pushforward in cohomology $i_*\colon H^0(M^{\mathbb{R}}(\mathcal{A}))\rightarrow H^{r(a+b-1)}(M^{a,b}(\mathcal{A}))$. Let $I\subseteq E$ be an independent set and $B\subseteq E$ a basis containing $I$:
    \begin{itemize}
        \item If $b=1$, then  $i_*(w_I^+) = \sgn(B)^d \eta_{I,B\setminus I}$.
        \item If $b>1$, then  $i_*(w_I^+)= \begin{cases}
            0 &\text{ if } \rk(I)\neq r, \\
            \sgn(I)^d \omega_I &\text{ otherwise.}
        \end{cases}$
    \end{itemize}
\end{lemma}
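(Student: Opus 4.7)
The plan is to reduce the statement to the Boolean sub-arrangement $\AAA_B := \{H_b : b \in B\}$ and then carry out the pushforward by a Künneth argument.

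\emph{Step 1: Reduction to $E = B$.} Let $j\colon M^{a,b}(\AAA) \hookrightarrow M^{a,b}(\AAA_B)$ and $j^\mathbb{R}\colon M^\mathbb{R}(\AAA) \hookrightarrow M^\mathbb{R}(\AAA_B)$ be the open inclusions induced by $\AAA_B \subseteq \AAA$. The square
\[
\begin{tikzcd}
M^\mathbb{R}(\AAA) \arrow[r, "j^\mathbb{R}"] \arrow[d, "i^\AAA"'] & M^\mathbb{R}(\AAA_B) \arrow[d, "i^{\AAA_B}"] \\
M^{a,b}(\AAA) \arrow[r, "j"'] & M^{a,b}(\AAA_B)
\end{tikzcd}
\]
is a pullback, with vertical maps proper closed embeddings of codimension $nd$. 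Naturality of the pushforward gives $j^*(i^{\AAA_B})_* = (i^\AAA)_*(j^\mathbb{R})^*$, and applied to $w_I^+$ yields $(i^\AAA)_*(w_I^+) = j^*(i^{\AAA_B})_*(w_I^+)$. By \Cref{rmk:independece_of_subarr}, $j^*$ maps $\omega_k$, $\psi_k$, hence $\eta_{C,D}$, to the classes of the same name on $\AAA$. Thus it suffices to prove the lemma for $\AAA_B$, and we assume $E = B$.

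\emph{Step 2: Change of coordinates.} Unimodularity gives $\det \chi_B = \sgn(B) = \pm 1$, so $\chi_B\colon G^\Lambda \to G^B$ is a Lie-group isomorphism that restricts to diffeomorphisms
\[\chi_B\colon M^\mathbb{R}(\AAA_B) \xrightarrow{\sim} (\mathbb{R} \setminus \{0\})^B \qquad \text{and} \qquad \chi_B\colon M^{a,b}(\AAA_B) \xrightarrow{\sim} (G \setminus \{e\})^B.\]
These are orientation-preserving up to the signs $\sgn(B)$ and $\sgn(B)^{a+b}$ respectively: the matrix $\chi_B$ acts once on $\mathbb{R}^n$ and identically on each of the $a+b$ coordinate factor-groups making up $G^n$. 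Under these identifications $i^{\AAA_B}$ corresponds to the $n$-fold product $i^{\times n}$, and the base-change formula contributes the combined orientation factor $\sgn(B)\cdot \sgn(B)^{a+b} = \sgn(B)^d$, since $a+b+1 \equiv d \pmod 2$.

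\emph{Step 3: Product formula.} Via Künneth, $w_I^+$ corresponds to the external product $\bigotimes_{b \in B} \alpha_b$, where $\alpha_b = w^+$ if $b \in I$ and $\alpha_b = 1$ otherwise; $(i^{\times n})_*$ is the tensor product of factorwise pushforwards, with no Koszul sign because all $\alpha_b$ lie in degree zero. Substituting \eqref{eq:pushforward_i} one obtains two cases. If $b > 1$, then $i_*(1) = 0$ forces $I = B$; the image is $\omega^{\otimes B}$, which pulls back to $\omega_B = \omega_I$, giving $\sgn(B)^d \omega_I = \sgn(I)^d \omega_I$. If $b = 1$, then $i_*(1) = \psi$ and the image is an interleaved tensor of $\omega$'s and $\psi$'s; pulling back gives an interleaved cup product indexed by $B$. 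Since $\deg \omega = \deg \psi = a = d$, rearranging into $\omega_I \psi_{B\setminus I}$ produces the Koszul sign $(-1)^{d \ell(I, B\setminus I)}$, which is precisely $\eta_{I, B \setminus I}$, giving the announced formula $\sgn(B)^d \eta_{I, B\setminus I}$.

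\emph{Main obstacle.} The principal technical point is the sign bookkeeping. The diffeomorphism $\chi_B$ contributes $\sgn(B)$ on the real side but $\sgn(B)^{a+b}$ on the abelian side, and the identity $\sgn(B)^{a+b+1} = \sgn(B)^d$ collapses them to a single factor $\sgn(B)^d$. The Künneth reordering signs also need verification; they work out cleanly in the $b=1$ case because $\omega$ and $\psi$ share the same degree $d$, so each inversion contributes exactly $(-1)^d$, matching the definition of $\eta_{I, B\setminus I}$.
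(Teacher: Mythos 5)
Your proposal is correct and follows essentially the same route as the paper: reduce to the Boolean sub-arrangement $\mathcal{B}=\{H_b:b\in B\}$ via naturality of the pushforward, then compute factorwise through the K\"unneth decomposition and the pushforward formula \eqref{eq:pushforward_i} on $G\setminus\{e\}$. The only difference is one of explicitness --- where the paper tersely remarks that ``the K\"unneth isomorphism $k^{a,b}$ depends on the sign of the basis $B$,'' you unpack the origin of that factor $\sgn(B)^d$ as the product of the orientation signs $\sgn(B)$ and $\sgn(B)^{a+b}$ of the coordinate change $\chi_B$ on the real and abelian sides; this is a correct and welcome elaboration rather than a distinct argument.
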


\begin{proof}
Let $\mathcal{B} \subset \AAA$ be the sub-arrangement $\{ H_b \mid b \in B \}$.
Consider the inclusions $j^{\mathbb{R}} \colon M^{\mathbb{R}}(\mathcal{A}) \hookrightarrow M^{\mathbb{R}}(\mathcal{B})$ and $j \colon M^{a,b}(\mathcal{A}) \hookrightarrow M^{a,b}(\mathcal{B})$.
Recall that $d=a+b-1$ is the codimension of $\RR$ in $G$.
The following diagram commutes by the naturality of pushforward in cohomology (property \ref{item:natrurality_pushforward}):
\[
\begin{tikzcd}
H^0(M^{\mathbb{R}}(\mathcal{A})) \arrow[d, "i_{\mathcal{A *}}"'] & H^0(M^{\mathbb{R}}(\mathcal{B})) \arrow[l, "{j^{\mathbb{R}}}^*"'] \arrow[d, "i_{\mathcal{B} *}"]& H^0(\mathbb{R}\setminus 0)^{\otimes r} \arrow[d, "i_*^{\otimes r}"]  \arrow[l, "\simeq", "k^{\RR}"']\\
{H^{rd}(M^{a,b}(\mathcal{A}))}                             & {H^{rd}(M^{a,b}(\mathcal{B}))} \arrow[l, "{j^{a,b}}^*"]                  & H^{d}(G\setminus e)^{\otimes r} \arrow[l, "\simeq"', "k^{a,b}"]
\end{tikzcd}\]
The horizontal maps in the left-hand square are induced by the inclusion of the complement of $\mathcal{A}$ in the complement of $\mathcal{B}$, in $\mathbb{R}^r$ and $G^r$ respectively. Since $M^{\mathbb{R}}(\mathcal{B}) \cong (\mathbb{R}\setminus \{0\})^r$ and $M^{a,b}(\mathcal{B}) \cong (G\setminus \{e\})^r$, the horizontal maps in the right-hand square are given by the K\"unneth isomorphisms.
Note that the K\"unneth isomorphism $k^{a,b}$ depends on the sign of the basis $B$.

For the sake of notation we assume that $I$ are the first elements of $B$, the general case differs by the sign $(-1)^{d\ell(I,B\setminus I)}$ (where $\ell(A,B)$ is introduced in Definition \ref{def:sgn_ell}) given by the reordering of $\omega_i$ and $\psi_b$ for $i \in I$ and $b\in B \setminus I$.
We have
\begin{align*}
    i_{\AAA*}(\omega_I^+)= i_{\AAA*}j^{\RR *}k^\RR((w^+)^{\otimes|I|}\otimes1^{\otimes n-|I|}) = j^{a,b *}k^{a,b}((i_* w^+)^{\otimes|I|}\otimes i_*1^{\otimes n-|I|})
\end{align*}
We use eq.\ \eqref{eq:pushforward_i} distinguishing two cases.
If $b=1$, 
\[i_{\AAA*}(\omega_I^+)=j^{a,b *}k^{a,b} (\omega^{\otimes|I|}\otimes \psi^{\otimes n-|I|})= j^{a,b *}(\sgn(B)^d \omega_I \psi_{B \setminus I})= \sgn(B)^d \eta_{I, B\setminus I}, \]
where in the last equality we use \Cref{rmk:independece_of_subarr}.
Otherwise $b>1$, $i_*(1)=0$ and hence ${i_{\AAA}}_*(w^+_I)=0$ if $\rk(I)\neq r$.
If $\rk(I)=r$, then
\[i_{\AAA*}(\omega_I^+)= j^{a,b *}k^{a,b} (\omega^{\otimes n})= j^{a,b *}(\sgn(I)^d \omega_I)= \sgn(I)^d \omega_I. \]
This completes the proof.
\end{proof}

\begin{remark}
    Note that, also in the case $b=1$, the image of $i_*$ does not depend on the choice of the (unimodular) basis $B$. In fact, if $B'\subseteq E$ is another (unimodular) basis containing $I$, $\sgn (B)^d\psi_{B\setminus I} \omega_I = \sgn (B')^d\psi_{B'\setminus I} \omega_I$ by using the linear relations among the classes $\psi$.
    %coming from the linear dependencies among the characters of $B$ and $B'$.
\end{remark}

\begin{definition}
    Let $C\subseteq E$ be a circuit and $B\supseteq C$ such that $B\setminus{\{i\}}$ is a basis for any $i\in C$. We denote by $c_i$ the sign of the determinant of the matrix whose columns are the elements in $B\setminus{\{i\}}$.
\end{definition}
    Up to a global sign, the $c_i$'s do not depend on the choice of $B$. 
    %The collection of these signs defines a chirotope (or equivalently an oriented matroid), cf \cite{OrientedMatroids}.

\begin{lemma}\label{lemma2}
    Let $\mathcal{A}$ be a central arrangement and $C\subseteq E$ an unimodular oriented circuit with $C=C^+\sqcup C^-$. The following relations holds in $H^*(M^{a,b}(\mathcal{A}))$:
    \begin{itemize}
        \item If $b=1$, 
        \begin{equation}
        \sum_{\substack{K\subseteq C^- \\ K\neq \emptyset}}(-1)^{|K|}c_{i_K}^d\eta_{C\setminus K,K\setminus i_K}-\sum_{\substack{K\subseteq C^+ \\ K\neq \emptyset}}(-1)^{|K|}c_{i_K}^d\eta_{C\setminus K, K\setminus i_K}=0
\end{equation}
where $i_K\in K$.
\item If $b>1$ and $d$ odd,
    \begin{equation}
    \sum_{i\in C} (-1)^{\lvert C_{<i} \rvert} \omega_{C\setminus\{i\}}=0.
    \end{equation}
    
\item If $b>1$ and $d$ even,
    \begin{equation}
    \sum_{i\in C^-}  \omega_{C\setminus\{i\}} - \sum_{i\in C^+}  \omega_{C\setminus\{i\}}=0.
    \end{equation}
    \end{itemize}
\end{lemma}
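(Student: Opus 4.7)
The plan is to push forward the Varchenko--Gelfand relation~\eqref{rel} along the closed inclusion $i\colon M^{\mathbb{R}}(\mathcal{A})\hookrightarrow M^{a,b}(\mathcal{A})$ and evaluate each term via \Cref{lemma1}. By \Cref{rmk:independece_of_subarr}, it suffices to establish the identity in $H^{*}(M^{a,b}(\mathcal{A}_{C}))$ where $\mathcal{A}_{C}:=\{H_{i}\}_{i\in C}$, and pull it back along the inclusion of complements. Since $C$ is a unimodular oriented circuit, $\mathcal{A}_{C}$ is a totally unimodular central arrangement of rank $|C|-1$, so relation~\eqref{rel} holds in $H^{0}(M^{\mathbb{R}}(\mathcal{A}_{C}))$. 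For each nonempty $K\subseteq C$ the complement $C\setminus K$ is independent in $\mathcal{A}_{C}$; for a chosen $i_{K}\in K$, the set $B:=C\setminus\{i_{K}\}$ is a basis of $\mathcal{A}_{C}$ containing $C\setminus K$, with $\sgn(B)=c_{i_{K}}$.

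For $b=1$, \Cref{lemma1} applied in $\mathcal{A}_{C}$ gives $i_{*}(w_{C\setminus K}^{+})=c_{i_{K}}^{d}\,\eta_{C\setminus K,\,K\setminus\{i_{K}\}}$, so substituting into $i_{*}$ of~\eqref{rel} yields the first stated identity. For $b>1$, \Cref{lemma1} gives $i_{*}(w_{C\setminus K}^{+})=0$ unless $|K|=1$, and for a singleton $K=\{i\}$ one has $i_{*}(w_{C\setminus\{i\}}^{+})=c_{i}^{d}\,\omega_{C\setminus\{i\}}$; hence $i_{*}$ of~\eqref{rel} collapses to
\[\sum_{i\in C^{+}}c_{i}^{d}\,\omega_{C\setminus\{i\}}-\sum_{i\in C^{-}}c_{i}^{d}\,\omega_{C\setminus\{i\}}=0.\]
If $d$ is even this coincides, up to an overall sign, with the third stated relation since $c_{i}^{d}=1$. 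If $d$ is odd, $c_{i}^{d}=c_{i}$, and I would express $c_{i}$ in terms of the orientation: the unimodular circuit relation $\sum_{i\in C^{+}}\chi_{i}=\sum_{i\in C^{-}}\chi_{i}$ shows that the sign vector $(\varepsilon(i))_{i\in C}$ (with $\varepsilon(i)=\pm 1$ according as $i\in C^{\pm}$) lies in the one-dimensional left kernel of the $|C|\times(|C|-1)$ character matrix; by cofactor expansion the signed minors $\bigl((-1)^{|C_{<i}|}c_{i}\bigr)_{i\in C}$ span the same kernel, so $c_{i}=\epsilon(-1)^{|C_{<i}|}\varepsilon(i)$ for some global $\epsilon\in\{\pm 1\}$. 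Substituting and combining the sums over $C^{+}$ and $C^{-}$ collapses the two sums into $\epsilon\sum_{i\in C}(-1)^{|C_{<i}|}\omega_{C\setminus\{i\}}=0$, the second stated relation.

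The pushforward step is routine once \Cref{lemma1} is available; the main obstacle will be the sign bookkeeping in the odd-$d$ case, namely identifying the minor signs $c_{i}$ with $(-1)^{|C_{<i}|}\varepsilon(i)$ up to one global sign via the cofactor expansion of the character matrix and the unimodular circuit relation. The factor $\ell(A,B)$ hidden in the definition of $\eta_{A,B}$ must also be tracked carefully when reordering, but it enters uniformly for all $K$ and does not affect the final vanishing.
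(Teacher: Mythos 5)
Your overall strategy matches the paper's exactly: push forward the Varchenko--Gelfand relation~\eqref{rel} along $i$, evaluate each term via \Cref{lemma1}, and in the odd-$d$ case identify $c_i$ with $(-1)^{\lvert C_{<i}\rvert}\varepsilon(i)$ up to a global sign via the unimodular dependence relation $\sum_{i\in C}(-1)^{\lvert C_{<i}\rvert}c_i\chi_i=0$. The remaining sign analysis for $d$ even/odd is carried out correctly.

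However, there is a concrete gap in the reduction step. You propose to establish the relation in $H^{*}(M^{a,b}(\mathcal{A}_C))$, where $\mathcal{A}_C=\{H_i\}_{i\in C}$ is the sub-arrangement inside the same ambient $G^{r}$, and then "apply \Cref{lemma1} in $\mathcal{A}_C$." But $\mathcal{A}_C$ is generally not essential: it has matroid rank $n=\lvert C\rvert-1$ while living in $G^{r}$ with $r>n$. \Cref{lemma1} requires an essential arrangement: the pushforward $i_*\colon H^0(M^{\mathbb{R}}(\mathcal{A}_C))\to H^{r(a+b-1)}(M^{a,b}(\mathcal{A}_C))$ shifts degree by $r(a+b-1)$, not $n(a+b-1)$, and for $b>1$ the extra factor $i^{G^{r-n}}_*(1)\in H^{(a+b-1)(r-n)}(G^{r-n})$ vanishes (that cohomology group is already zero for $b>1$), so the pushforward is the zero map and you only obtain the trivial relation $0=0$. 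Moreover, $\sgn(B)$ and $c_{i_K}$ are not even defined for a basis $B$ of a non-essential arrangement, since the associated matrix is not square. The fix — which the paper carries out — is to first essentialize $C$ to an arrangement in $G^n$, establish the relation in $H^{*}(M^{a,b}(C))$ there, and then pull back to $H^{*}(M^{a,b}(\mathcal{A}))$ along the projection $p\colon M^{a,b}(\mathcal{A})\to M^{a,b}(C)$ (which factors through your inclusion $M^{a,b}(\mathcal{A})\hookrightarrow M^{a,b}(\mathcal{A}_C)$). This essentialization is not covered by \Cref{rmk:independece_of_subarr}, which only addresses sub-arrangements within the same ambient space, so it must be invoked separately.
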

\begin{proof} Let $r$ be the rank of the arrangement $\mathcal{A}$ and $n$ be the rank of the circuit $C$.
Consider the additive map given by the following composition:
\[
H^0(M^{\mathbb{R}}(C)) \xlongrightarrow{i_*} 
{H^{n(a+b-1)}(M^{a,b}(C))} \xlongrightarrow{p^*} H^{n(a+b-1)}(M^{a,b}(\mathcal{A}))
\]
where $p$ is the restriction of the projection $G^r \to G^r/ \cap_{c \in C} H_c \simeq G^n $.

Consider the relation \eqref{rel} in $H^0(M^{\mathbb{R}}(C))$ and we apply Lemma \ref{lemma1} to the essentialization of the arrangement $C$.
For any independent $J\subsetneq C^+$ (resp. $J\subsetneq C^-$), the choice of $i_K\in K=C^+\setminus J$ (resp. $i_K\in K=C^- \setminus J$) corresponds to complete $C^+\bigsqcup J=C\setminus K$ to a basis $C\setminus i_K$ of the sublattice generated by the elements of $C$.
By Lemma \ref{lemma1} and \Cref{rmk:independece_of_subarr} we have:
 \[ p^*(i_*(\omega^+_{C \setminus K}))= p^*(c_{i_K}^d\eta_{C\setminus K,K\setminus i_K}) = c_{i_K}^d\eta_{C\setminus K,K\setminus i_K},\]
and so Equation (\ref{rel}) is send to
$$
\sum_{\substack{K\subseteq C^- \\ K\neq \emptyset}}(-1)^{|K|}c_{i_K}^d\eta_{C\setminus K,K\setminus i_K}-\sum_{\substack{K\subseteq C^+ \\ K\neq \emptyset}}(-1)^{|K|}c_{i_K}^d \eta_{C\setminus K, K\setminus i_K}=0
$$
for $b=1$, and to
$$
\sum_{i \in C^- } c_i^d \omega_{C\setminus i} - \sum_{i \in C^+} c_i^d \omega_{C\setminus i}=0
$$
for $b>1$.
Since $C$ is unimodular, the dependence relation among the characters of the circuit is $\sum_{i\in C} (-1)^{\lvert C_{<i} \rvert} c_i\chi_i=0$, hence $i \in C^+$ if and only if $c_i = (-1)^{\lvert C_{<i} \rvert}$.
\end{proof}

\begin{remark}
    Lemma \ref{lemma2} holds also for a central circuit $C$ in a non central arrangement $\mathcal{A}$.  It is sufficient to consider the map $H^*(M^{a,b}(C))\rightarrow H^*(M^{a,b}(\mathcal{A}))$ induced by the inclusion $M^{a,b}(\mathcal{A})\hookrightarrow M^{a,b}(C)$.
\end{remark}

\begin{example}\label{ex: CU}
\begin{figure}
    \centering
\begin{tikzpicture}
 \draw[black, thick] (-2,0) -- (2,0);
    \draw[black, thick] (0,-2) -- (0,2);
    \draw[black, thick] (1.8,-1.8) -- (-1.8,1.8);
    %\filldraw[black] (2,0) circle (0.0000001pt) node[anchor=west]{$2$};
    %\filldraw[black] (0,2) circle (0.0000001pt) node[anchor=east]{$1$};
    %\filldraw[black] (-2,2) circle (0.0000001pt) node[anchor=north]{$3$};
    \draw[->] (1.8,0) -- (1.8,0.3);
    \node at (1.8,-0.3){$H_2$};
    \draw[->] (0,1.8) -- (0.3,1.8);
    \node at (-0.3,1.8){$H_1$};
    \draw[->] (-1.6,1.6) -- (-1.4,1.8); 
    \node at (-1.8,1.4){$H_3$};
\end{tikzpicture}
\quad \quad \quad \quad
\begin{tikzpicture}[scale=1.3]
\draw[cyan, thick] (-1.5,-1.5) -- (-1.5,1.5);
\draw[black, thick] (1.5,-1.5) -- (1.5,1.5);
\draw[black, thick] (-1.5,-1.5) -- (1.5,-1.5);
\draw[black, thick] (-1.5,1.5) -- (1.5,1.5);
\draw[red, thick] (-1.5,1.5) -- (1.5,-1.5);
\draw[green, thick] (-1.5,-1.5) -- (1.5,-1.5);
%\filldraw[black] (-1.5,-1.5) circle (0.5pt);
\end{tikzpicture}
    \caption{A picture of the arrangement in Example \ref{ex: CU} for $(a,b)=(1,0)$ on the left, and $(a,b)=(0,1)$ on the right.}
    \label{fig:Example1}
\end{figure}
    Let us consider the central arrangement $\mathcal{A}=\{H_i\}_{i=1,2,3}$ in $G^2$ defined by the columns of the matrix
    $$\begin{pmatrix}
1 & 0 & 1 \\
0 & 1 & 1 
\end{pmatrix}$$
(see Figure \ref{fig:Example1}). This is an arrangement of type $A_3$, hence $M(\mathcal{A})$ is the configuration space of $3$ points in $G$ (c.f.\ \Cref{Sec: conf}).
The subspace $H^*(G^2) \subset H^*(M(\mathcal{A}))$ is generated by the classes $\psi_1^j,\psi_2^j$ and $\psi_3^j=\psi_1^j+\psi_2^j$, with $j=1,\cdots, a$.
Furthermore, we have defined the classes $\omega_i\in H^{a+b-1}(M(\mathcal{A}))$ for $i=1,2,3$.
The circuit $C=C^+\sqcup C^-=\{1,2\}\sqcup \{3\}$ gives the following relations
\begin{itemize}
    \item if $b=1$, \begin{equation}\label{CU1}
        \omega_1\omega_2- (-1)^d \omega_2\omega_3-\omega_1\omega_3-\omega_3\psi_1=0,
    \end{equation}
    \item if $b>1$, \begin{equation}\label{CU2}
        \omega_1\omega_2-(-1)^d\omega_2\omega_3-\omega_1\omega_3=0.
    \end{equation}
\end{itemize}
\end{example}

\section{The long exact sequence of the pair}\label{Sec:4}
Now, we deal with the case of general arrangements, possibly non-central and non-unimodular.
Firstly, we prove a variation of the Brieskorn Lemma, a deletion-restriction short exact sequence, and an identity between Poincaré and characteristic polynomial.
We will apply these results in \Cref{Sec:5} to unimodular covers.

Given $H\in \AAA$ a subvariety of the arrangement, we denote by $\AAA':= \AAA \setminus \{H\}$ the \emph{deletion} of $H$ and $\AAA''$ the \emph{restriction} to $H$, i.e.\ the arrangement of connected components of $K\cap H$ for $K \in \AAA'$.
The complement $M(\AAA')=M(\AAA) \sqcup M(\AAA'')$ is a  disjoint union of an open and a closed subset.
The relative cohomology $H^*(M(\AAA'),M(\AAA))$ is isomorphic to $H^{*-a-b}(M(\AAA''))$ via the Thom isomorphism and excision theorem.
Therefore, the forgetful map of the long exact sequence of the pair $(M(\AAA'),M(\AAA))$ can be identified through the aforementioned isomorphism with the pushforward $\iota_*$, where $\iota \colon M(\AAA'') \to M(\AAA')$ is the closed immersion (see property~\ref{item:prop_4_pushforward}).

For any layer $p \in \LL(\AAA)$ we consider the \emph{local arrangement} $\AAA_p:= \{H \in \AAA \mid p \subseteq H\}$ at $p$ and let $g_p \colon M(\AAA) \to M(\AAA_p)$ be the inclusion of the complements.

\begin{theorem}\label{thm: nonC}
Let $\mathcal{A}$ be an arrangement of rank $r$ and $H \in \mathcal{A}$ a subvariety, then:
\begin{enumerate}
\item $\iota_* \colon H^*(M(\mathcal{A}'')) \to H^{*+a+b}(M(\mathcal{A}'))$ is the zero map,
%\item the following sequence is exact \[0 \to H^*(M(\mathcal{A}')) \to H^*(M(\mathcal{A})) \to H^{*-a-b+1}(M(\mathcal{A}'')) \to 0, \]
\item the map $ \oplus_{p \in \mathcal{L}^r} g_p^* \colon \bigoplus_{p \in \mathcal{L}^r} H^*(M(\mathcal{A}_p)) \to H^*(M(\mathcal{A}))$ is surjective.
\end{enumerate}
\end{theorem}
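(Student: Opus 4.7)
The plan is to prove parts (1) and (2) jointly by induction on $|\mathcal{A}|$, with (1) producing the deletion-restriction short exact sequence that drives the inductive proof of (2). The geometric heart of the argument is that the normal bundle of $M(\mathcal{A}'') \subset M(\mathcal{A}')$ is globally trivial (it is the restriction of the normal bundle of $H = (\chi_H - g_H)^{-1}(e)$ in $G^r$, trivialized by the group submersion $\chi_H - g_H$), while the ambient group $G = \mathbb{R}^b \times (S^1)^a$ has vanishing top cohomology $H^{a+b}(G)=0$ in the non-compact regime $b \geq 1$.

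For part (1), I would use that the pullback diagram
\[
\begin{tikzcd}
M(\mathcal{A}'') \arrow[d, hook, "\iota"'] \arrow[r, "h"] & \{e\} \arrow[d, hook, "j"] \\
M(\mathcal{A}') \arrow[r, "\chi_H - g_H"'] & G
\end{tikzcd}
\]
has both vertical arrows proper closed embeddings of codimension $a+b$. Naturality of the pushforward (property~\ref{item:natrurality_pushforward}) yields $\iota_*(1) = (\chi_H - g_H)^*(j_*(1))$, and since $G\simeq (S^1)^a$ is homotopy equivalent to a manifold of dimension $a < a+b$, we have $j_*(1) \in H^{a+b}(G) = 0$, whence $\iota_*(1) = 0$. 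To upgrade to $\iota_* \equiv 0$ I would invoke the projection formula $\iota_*(\iota^*(\tilde x)\cdot 1) = \tilde x \cdot \iota_*(1) = 0$, so it suffices to show $\iota^* \colon H^*(M(\mathcal{A}')) \to H^*(M(\mathcal{A}''))$ is surjective. The unimodularity hypothesis makes each $K \in \mathcal{A}''$ the unique connected component of $K' \cap H$ for a single $K' \in \mathcal{A}'$, yielding $\iota^*(\omega_{K'}) = \omega_K$ and $\iota^*(\psi_{K'}) = \psi_K$ by direct inspection of the defining maps; ambient group classes lift through $H^*(G^r)$. An inductive ring-generation statement for $H^*(M(\mathcal{A}''))$ then closes the step.

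For part (2), once (1) is established the long exact sequence of $(M(\mathcal{A}'), M(\mathcal{A}))$, combined with the Thom isomorphism for the trivial normal bundle of codimension $a+b$, collapses to
\[ 0 \to H^*(M(\mathcal{A}')) \xrightarrow{\mathrm{res}} H^*(M(\mathcal{A})) \xrightarrow{\partial} H^{*-a-b+1}(M(\mathcal{A}'')) \to 0. \]
Naturality of the connecting map along $(\chi_H - g_H)\colon (M(\mathcal{A}'), M(\mathcal{A})) \to (G, G \setminus \{e\})$ gives $\partial(\omega_H) = 1$, so by the Leibniz property of $\partial$ any class of the form $\iota^*(\alpha) \in H^*(M(\mathcal{A}''))$ admits $\alpha|_{M(\mathcal{A})}\cdot \omega_H$ as a preimage under $\partial$. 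The max-rank layers partition as $\mathcal{L}^r(\mathcal{A}) = \{p \not\subseteq H\} \sqcup \{p \subseteq H\}$: the first set equals $\mathcal{L}^r(\mathcal{A}')$ with $\mathcal{A}_p = \mathcal{A}'_p$, while the second bijects with the top-rank layers of $\mathcal{A}''$ (and $\mathcal{A}_p$ contains $H$ as a factor). The inductive hypothesis (2) applied to $\mathcal{A}'$ and $\mathcal{A}''$ surjects onto the two ends of the SES, and these combine through $\mathrm{res}$ and the $\omega_H$-lift into the desired surjection $\bigoplus_p g_p^* \twoheadrightarrow H^*(M(\mathcal{A}))$. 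The main obstacle is the surjectivity of $\iota^*$ in part (1), which is essentially a ring-generation statement for $H^*(M(\mathcal{A}''))$; the induction must therefore simultaneously handle ring-generation, part (1), and part (2), with unimodularity ensuring the recursion terminates ($|\mathcal{A}''| \leq |\mathcal{A}'| < |\mathcal{A}|$) and making the $K \leftrightarrow K'$ correspondence unambiguous so that the generators of $H^*(M(\mathcal{A}''))$ lift cleanly.
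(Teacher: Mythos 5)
Your observation that $\iota_*(1)=0$ is a nice geometric shortcut: the pullback square you draw \emph{is} cartesian, naturality of pushforward does give $\iota_*(1)=(\chi_H-g_H)^*(j_*(1))$, and $j_*(1)\in H^{a+b}(G)=0$ because $G\simeq(S^1)^a$ has cohomological dimension $a<a+b$ when $b\geq 1$. This is the non-compactness being used exactly where one expects. But the jump from $\iota_*(1)=0$ to $\iota_*\equiv 0$ is where the argument has a genuine gap. The projection formula gives $\iota_*(\iota^*(y))=y\cdot\iota_*(1)=0$ only on the image of $\iota^*$, so you must prove $\iota^*\colon H^*(M(\mathcal{A}'))\to H^*(M(\mathcal{A}''))$ surjective. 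That is a ring-generation statement for $H^*(M(\mathcal{A}''))$, and you concede as much. The difficulty is that this generation statement is essentially \Cref{lem: gen}, which the paper proves \emph{using} \Cref{cor: ses}, which in turn rests on the very theorem you are proving. Your proposed fix --- running a triple induction that simultaneously handles generation, (1), and (2) --- is named but never executed, and it is not clear the relevant generation statement even holds with the stated generators for non-unimodular restrictions $\mathcal{A}''$, where $K'\cap H$ disconnects and the $K\leftrightarrow K'$ dictionary you invoke is no longer well-defined.

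The paper's proof sidesteps all of this. The base case is the central theorem of Liu--Tran--Yoshinaga. For non-central $\mathcal{A}$, the local arrangements $\mathcal{A}_p$ are strictly smaller, and the point is the localized form of naturality: the cartesian square relating $\iota\colon M(\mathcal{A}'')\hookrightarrow M(\mathcal{A}')$ to $\iota_p\colon M(\mathcal{A}''_p)\hookrightarrow M(\mathcal{A}'_{\tilde p})$ gives $\iota_*\circ g_p^{\prime\prime *}=g_{\tilde p}^{\prime *}\circ\iota_{p*}$. Then $\iota_{p*}=0$ comes from part (1) applied to the (central, smaller) $\mathcal{A}_p$, while surjectivity of $\oplus_p g_p^{\prime\prime *}$ is part (2) applied to the smaller arrangement $\mathcal{A}''$. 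Composing, $\iota_*$ vanishes on a set of generators that you \emph{already know} span by the inductive hypothesis (2); no independent ring-generation lemma is needed. That is the crucial difference --- the paper gets the required surjectivity ``for free'' from the inductive hypothesis of (2), whereas your route asks for surjectivity of a map that (2) says nothing about. Your sketch of part (2) given a short exact sequence is broadly in line with the paper's final diagram chase (with the caveat that the partition of $\mathcal{L}^r(\mathcal{A})$ by containment in $H$ is slightly off: a rank-$r$ layer of $\mathcal{A}'$ can sit inside $H$, in which case $\mathcal{A}_p\neq\mathcal{A}'_p$), but part (1) as you have set it up is incomplete, and completing it along your route would require substantially more than the paper does.
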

\begin{proof}
In the central case the theorem follows from \cite[Theorem 7.7]{LiuTranYoshinaga2021}. 
%Now, we assume that the arrangement is not central and 
We prove the claims by induction on the rank and the cardinality of the arrangement $\mathcal{A}$.
The base case is a central arrangement and the results hold.
%with just one hypersurface follows because it is a central arrangement.
The assumption that $\mathcal{A}$ is not central implies $\lvert \mathcal{A}_p \rvert < \lvert \mathcal{A} \rvert$ for any $p \in \mathcal{L}^r$.
Consider $p\in \LL(\AAA'')$ and let $\tilde{p} \in \LL(\AAA')$ be the unique minimal layer containing $p$ (if $H$ is not a coloop then $p=\tilde{p}$), the diagram
\begin{center}
\begin{tikzcd}
M(\mathcal{A}'') \arrow[r, "\iota"] \arrow[d, "g''_p"'] & M(\mathcal{A}') \arrow[d, "g'_{\tilde{p}}"] \\
M(\mathcal{A}''_p) \arrow[r, "\iota_p"']                      & M(\mathcal{A}'_{\tilde{p}})                      
\end{tikzcd}
\end{center}
is a pullback diagram and so $\iota_* \circ g^{\prime \prime *}_p= g^{\prime *}_{\tilde{p}} \circ \iota_{p *}$ by property \ref{item:natrurality_pushforward}.
Since the long exact sequence of the pair and the Thom isomorphism are functorial, the following diagram commutes:
\begin{center}
    \begin{tikzcd}
H^{k-d-1}(M(\mathcal{A}_p'')) \arrow[r, "\iota_p^*"] \arrow[d, "g_p^{\prime \prime *}"] & H^k(M(\mathcal{A}'_{\tilde{p}})) \arrow[r, "j_p^*"] \arrow[d, "g^{\prime *}_{\tilde{p}}"] & H^k(M(\mathcal{A}_p)) \arrow[r, "\delta_p"] \arrow[d, "g^*_p"] & H^{k-d}(M(\mathcal{A}_p'')) \arrow[d, "g_p^{\prime \prime *}"]  \\
H^{k-d-1}(M(\mathcal{A}'')) \arrow[r,  "\iota_* "] & H^k(M(\mathcal{A}')) \arrow[r, "j^*"]         & H^k(M(\mathcal{A})) \arrow[r,  "\delta "]                      & H^{k-d}(M(\mathcal{A}'')) 
\end{tikzcd}
\end{center}
For $p \in \LL(\AAA) \setminus \LL(\AAA'')$ we have a similar diagram
\begin{center}
    \begin{tikzcd}
0 \arrow[r] \arrow[d] & H^k(M(\mathcal{A}'_{p})) \arrow[r, "j_p^*"] \arrow[d, "g^{\prime *}_{p}"] & H^k(M(\mathcal{A}_p)) \arrow[r] \arrow[d, "g^*_p"] & 0 \arrow[d]  \\
H^{k-d-1}(M(\mathcal{A}'')) \arrow[r,  "\iota_* "] & H^k(M(\mathcal{A}')) \arrow[r, "j^*"]         & H^k(M(\mathcal{A})) \arrow[r,  "\delta "]                      & H^{k-d}(M(\mathcal{A}'')) 
\end{tikzcd}
\end{center}
because $\AAA'_p=\AAA_p$.
We consider the direct sum of previous sequences for all $p \in \LL(\AAA)$.
Since $\rk( \mathcal{A}'' ) < \rk (\mathcal{A})$, the map $\oplus_{p \in \mathcal{L}^r} g_p^{\prime \prime *}$ is surjective and $\iota_{p *}=0$ by inductive step:
\begin{center}
\begin{tikzcd}
    \oplus_{p}H^{k-d-1}(M(\mathcal{A}_p'')) \arrow[r, "0"] \arrow[d, "\oplus_{p}g_p^{\prime \prime *}", two heads] & \oplus_{\tilde{p}}H^k(M(\mathcal{A}'_{\tilde{p}})) \arrow[r, "\oplus_{p}j_p^*"] \arrow[d, "\oplus_{p}g^{\prime *}_{\tilde{p}}", two heads] & \oplus_{p}H^k(M(\mathcal{A}_p)) \arrow[r, "\oplus_{p}\delta_p"] \arrow[d, "\oplus_{p}g^*_p"] & \oplus_{p}H^{k-d}(M(\mathcal{A}_p'')) \arrow[d, "\oplus_{p}g_p^{\prime \prime *}", two heads] \\
    H^{k-d-1} (M(\mathcal{A}'')) \arrow[r,  "\iota_* "] & H^k(M(\mathcal{A}')) \arrow[r, "j^*"] & H^k(M(\mathcal{A})) \arrow[r,  "\delta "] & H^{k-d} (M(\mathcal{A}''))
\end{tikzcd}
\end{center}
It follows that $\iota_*=0$.
The two rows are exact because they are long exact sequences of pairs and by diagram chasing the map $ \oplus_{p \in \mathcal{L}^r} g_p^*$ is surjective.
\end{proof}

The corollary below follows directly from \Cref{thm: nonC}.
\begin{corollary}\label{cor: ses}
Let $\mathcal{A}$ be an arrangement of rank $r$ 
and $H \in \mathcal{A}$ a subvariety. The following sequence is exact
\[0 \to H^*(M(\mathcal{A}')) \xrightarrow{j^*} H^*(M(\mathcal{A})) \xrightarrow{\res} H^{*-d}(M(\mathcal{A}'')) \to 0. \]
\end{corollary}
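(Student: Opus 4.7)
The plan is to extract the short exact sequence directly from the long exact sequence of the pair $(M(\mathcal{A}'), M(\mathcal{A}))$, which is essentially the Gysin sequence already implicit in the proof of \Cref{thm: nonC}. The closed complement $M(\mathcal{A}')\setminus M(\mathcal{A})$ equals $H\cap M(\mathcal{A}')$, a closed submanifold of $M(\mathcal{A}')$ of codimension $a+b$. Applying the Thom isomorphism to a tubular neighborhood identifies $H^{k}(M(\mathcal{A}'), M(\mathcal{A}))$ with $H^{k-(a+b)}(M(\mathcal{A}''))$, so the long exact sequence of the pair becomes
\[
\cdots \to H^{k-(a+b)}(M(\mathcal{A}'')) \xrightarrow{\iota_*} H^k(M(\mathcal{A}')) \xrightarrow{j^*} H^k(M(\mathcal{A})) \xrightarrow{\res} H^{k-(a+b-1)}(M(\mathcal{A}'')) \xrightarrow{\iota_*} H^{k+1}(M(\mathcal{A}')) \to \cdots,
\]
where $\iota_*$ is the pushforward considered in \Cref{thm: nonC} and $\res$ is the connecting map twisted by Thom.

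The key input is \Cref{thm: nonC}(1), namely that $\iota_*=0$ in every degree. Substituting this into the displayed sequence collapses it on both sides: on the left, $\ker j^* = \im\iota_* = 0$, so $j^*$ is injective; on the right, $\im\res = \ker\iota_* = H^{k-(a+b-1)}(M(\mathcal{A}''))$, so $\res$ is surjective; exactness at $H^k(M(\mathcal{A}))$ is automatic from the long exact sequence. This is exactly the claimed short exact sequence.

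There is essentially no obstacle here; the corollary is a purely formal consequence of the vanishing $\iota_* = 0$. The one point that might warrant a parenthetical remark is the identification of the Gysin map appearing as the connecting data of the pair sequence with the pushforward $\iota_*$ used in \Cref{thm: nonC}, but this follows directly from the description of the pushforward through the Thom isomorphism recalled in \Cref{Sec:3} and is built into the commutative diagrams already displayed in the proof of \Cref{thm: nonC}.
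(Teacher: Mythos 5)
Your proof is correct and follows the same route the paper intends: the corollary is obtained by inserting the vanishing $\iota_*=0$ from \Cref{thm: nonC}(1) into the deletion--restriction long exact sequence (the Gysin sequence for the codimension-$(a+b)$ closed submanifold $M(\mathcal{A}')\setminus M(\mathcal{A})\cong M(\mathcal{A}'')$), which is precisely the bottom row already displayed in the proof of \Cref{thm: nonC}. The paper compresses this to a single sentence (``follows directly from \Cref{thm: nonC}''), and your writeup fills in the formal bookkeeping — Thom isomorphism, excision, degree shift by $a+b-1$ — accurately.
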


The characteristic polynomial of an abelian arrangement is 
\[\chi_\AAA(t):= \sum_{W \in \mathcal{L}(\AAA)} \mu(\hat{0},W) t^{r-\rk(W)},\]
where $\mu$ is the M\"obius function of the poset of layers $\mathcal{L}(\AAA)$.

\begin{corollary}\label{cor: gen}
    For any abelian arrangement $\mathcal{A}$ in a non-compact group $G$, the 
     %cohomology ring of the complement $H^*(M(\mathcal{A}))$ is generated by the classes $\omega_i$ and $\psi_i^j$ for $i\in E$, $j=1,\cdots, a$ and its 
     Poincaré polynomial only depends on the characteristic polynomial of the arrangement:
    \[P_{\mathcal{A}}(t)=(-t^{a+b-1})^{r}\chi_{\mathcal{A}}\Big(-\frac{(1+t)^a}{t^{a+b-1}}\Big). \]
\end{corollary}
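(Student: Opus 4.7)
My plan is induction on $\lvert \mathcal{A} \rvert$, turning \Cref{cor: ses} into a numerical recursion for the Poincaré polynomial and matching it with the standard deletion--restriction recursion for the characteristic polynomial. Writing $d = a+b-1$, the short exact sequence of \Cref{cor: ses} is one of graded vector spaces with $\res$ of degree $d$, so taking dimensions in each degree and summing against $t^k$ yields
\[
P_{\mathcal{A}}(t) \;=\; P_{\mathcal{A}'}(t) \;+\; t^{d}\, P_{\mathcal{A}''}(t).
\]
On the combinatorial side, the characteristic polynomial, defined via the Möbius function on the intersection poset, satisfies its classical deletion--restriction identity
\[
\chi_{\mathcal{A}}(q) \;=\; \chi_{\mathcal{A}'}(q) \;-\; \chi_{\mathcal{A}''}(q)
\]
whenever the removed subvariety $H$ is not a coloop; in that case $\rk \mathcal{A}' = \rk \mathcal{A} = r$ and $\rk \mathcal{A}'' = r-1$.

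For the inductive step in the non-coloop case, set $u = -(1+t)^a/t^{d}$ and apply the inductive hypothesis to the smaller arrangements $\mathcal{A}'$ and $\mathcal{A}''$:
\begin{align*}
(-t^{d})^r \chi_{\mathcal{A}}(u) &= (-t^{d})^r \chi_{\mathcal{A}'}(u) \;+\; t^{d}\,(-t^{d})^{r-1} \chi_{\mathcal{A}''}(u) \\
&= P_{\mathcal{A}'}(t) \;+\; t^{d}\, P_{\mathcal{A}''}(t) \;=\; P_{\mathcal{A}}(t).
\end{align*}
This closes the induction in the generic case. The base case is the empty arrangement (equivalently, $r=0$), for which $M(\emptyset)=G^{0}$ is a point and both sides equal $1$; the smallest nontrivial check is a single subvariety $\{H\}\subset G$, verified directly from the definitions of $P$ and $\chi$.

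The expected main obstacle is the coloop case. When $H$ is a coloop we have $\rk \mathcal{A}' = r-1$, so the prefactor $(-t^{d})^r$ on the left-hand side does not agree with the prefactor $(-t^{d})^{r-1}$ delivered by the inductive hypothesis for $P_{\mathcal{A}'}$, and the algebraic manipulation above breaks down. My plan here is to exploit the product decomposition $M(\mathcal{A}) \cong M(\overline{\mathcal{A}}) \times (G \setminus \{e\})$ that a coloop direction induces, together with the corresponding factorization of $\chi_{\mathcal{A}}$, which reduces the verification to the one-subvariety arrangement $\{H\}\subset G$ (computed by hand) times the inductive hypothesis applied to $\overline{\mathcal{A}}$. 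Alternatively, one can order the induction so as to delete non-coloops first and treat arrangements consisting entirely of coloops as an enlarged base case, again by a direct product computation.
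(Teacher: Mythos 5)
Your proof is correct and takes essentially the same approach as the paper, which merely states that ``the identity follows by induction using the exact sequence in \Cref{thm: nonC}'' and leaves all details to the reader; you have filled in precisely those details, including the numerical recursion $P_{\mathcal{A}} = P_{\mathcal{A}'} + t^{d}P_{\mathcal{A}''}$ from \Cref{cor: ses}, the matching deletion--restriction identity $\chi_{\mathcal{A}} = \chi_{\mathcal{A}'} - \chi_{\mathcal{A}''}$, and the rank-mismatch in the coloop case, which the paper glosses over but which you correctly resolve via the product decomposition $M(\mathcal{A}) \cong M(\mathcal{A}'') \times (G\setminus\{e\})$.
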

\begin{proof}
    The identity follows by deletion-contraction recurrence, using the exact sequence in Corollary \ref{cor: ses}.
\end{proof}

    The same formula for the Poincaré polynomial is proven for central abelian arrangements in \cite[Theorem 7.8]{LiuTranYoshinaga2021}

\begin{corollary}
\label{cor:omega_C_nullo}
    Let $C$ be a unimodular, possibly non-central, circuit of $\mathcal{A}$. Then
    \[\omega_C=0.\]
\end{corollary}
\begin{proof}
    Assume that $C=\{1,2,\dots,n\}$ and consider the subvariety $H_0$ obtained as the unique translated of $H_1$ containing $\cap_{i \in C\setminus\{1\}} H_i$.
    The set $D:=(C \cup \{0\}) \setminus \{1\}$ is a central circuit  and we can multiply the relations of Lemma \ref{lemma2} by $\omega_1$ obtaining
    \[ \sum_{\substack{K\subseteq D^- \\ K\neq \emptyset}}(-1)^{|K|}c_{i_K}^d\omega_1\eta_{D\setminus K,K\setminus i_K}-\sum_{\substack{K\subseteq D^+ \\ K\neq \emptyset}}(-1)^{|K|}c_{i_K}^d\omega_1\eta_{D\setminus K, K\setminus i_K}=0.\]
    The elements $0,1 $ are parallel, so $\omega_0\omega_1=0$ and $\omega_1\psi_0=0$, see also Remark \ref{rmk:independece_of_subarr}.
    In particular, all the terms in the above sum are zero unless the one with $K=\{0\}$, hence the equation reduces to $\omega_C =0$ in the cohomology ring of the arrangement $\AAA \cup \{H_0\}$.
    The claimed equality follows from the injectivity of the map $H^*(M(\AAA)) \to H^*(M(\AAA \cup \{H_0\}))$ by Corollary \ref{cor: ses}.
\end{proof}

\begin{example}\label{ex: NCU}

\begin{figure}
    \centering
\begin{tikzpicture}[scale=0.5]
    \draw[black, thick] (-5,0) -- (2,0);
    \draw[black, thick] (0,-2) -- (0,6);
    \draw[black, thick] (-5,4) -- (2,4);
    \draw[black, thick] (2,-2) -- (-5,5);
    \draw[black, thick] (-5,1) -- (-2,-2);
    \draw[black, thick] (-2,6) -- (2,2);
    %\filldraw[black] (0,6) circle (0.0000001pt) node[anchor=west]{$1$};
    %\filldraw[black] (2,0) circle (0.0000001pt) node[anchor=south]{$2$};
    %\filldraw[black] (2,4) circle (0.0000001pt) node[anchor=south]{$2'$};
    %\filldraw[black] (2,-2) circle (0.0000001pt) node[anchor=west]{$3$};
    %\filldraw[black] (2,2) circle (0.0000001pt) node[anchor=west]{$3'$};
    %\filldraw[black] (-2,-2) circle (0.0000001pt) node[anchor=west]{$3'$};
    \draw[->] (0,5.8) -- (0.4,5.8); {}
    \draw[->] (1.8,0) -- (1.8,0.4); {}
    \draw[->] (1.8,4) -- (1.8,4.4); {}
    \draw[->] (-4.5,0.5) -- (-4.2,0.8); {}
    \draw[->] (-2,2) -- (-1.7,2.3); {}
    %\draw[->] (-1,5) -- (-0.7,5.3); {}   
    %\node at (-1,5.8){$H_{3^{'}}$};
    \node at (-4.2,1.1){$H_{3^{'}}$};
    \node at (-4.4,5.2){$H_3$};
    \node at (1.8,-0.5){$H_2$};
    \node at (1.8,3.5){$H_{2^{'}}$};
    \node at (-0.5,-1.5){$H_1$};
\end{tikzpicture}
\quad \quad \quad \quad
\begin{tikzpicture}[scale=1.3]
\draw[cyan, thick] (-1.5,-1.5) -- (-1.5,1.5);
\draw[black, thick] (1.5,-1.5) -- (1.5,1.5);
\draw[black, thick] (-1.5,-1.5) -- (1.5,-1.5);
\draw[black, thick] (-1.5,1.5) -- (1.5,1.5);
\draw[red, thick] (-1.5,1.5) -- (1.5,-1.5);
\draw[green, thick] (-1.5,-1.5) -- (1.5,-1.5);
\draw[violet, thick] (-1.5,0) -- (1.5,0);
\draw[orange, thick] (-1.5,0) -- (0,-1.5);
\draw[orange, thick] (1.5,0) -- (0,1.5);
%\filldraw[black] (-1.5,-1.5) circle (0.5pt);
\end{tikzpicture}
    \caption{A picture of the non central arrangement in Example \ref{ex: NCU} when $(a,b)=(1,0)$ on the left, and $(a,b)=(0,1)$ on the right.}
    \label{fig:Example2}
\end{figure}

   Let us consider the arrangement obtained from the arrangement
   in the Example \ref{ex: CU} by adding the two hyperplanes $H_{2'}=\{\chi_2^{-1}(-e)\}$ and $H_{3'}= \{(\chi_1+\chi_2)^{-1}(-e)\}$ as shown in Figure \ref{fig:Example2}. \\
   The arrangement is not central anymore, but it is still unimodular. As in the previous example, the cohomology $H^*(M(\mathcal{A}))$ is generated by the classes 
   %coming from the cohomology of the ambient space are 
   $\psi_1^j, \psi_2^j, \psi_3^j=\psi_1^j+\psi_2^j$, $j=1, \cdots, a$ and 
   %those arising from the hyperplanes are 
   $\omega_1, \omega_2, \omega_{2'}, \omega_3, \omega_{3'}$.
   There are two central circuits:
   $$C=C^+ \sqcup C^- = \{1,2\} \sqcup \{3\}, \quad C'=(C')^+ \sqcup (C')^- = \{1,2'\} \sqcup \{3'\}$$
   When $b=1$, the first circuit leads to the same relation eq.\ \eqref{CU1} of \Cref{ex: CU}, while the second one to the new relation:
    \begin{equation}\label{NCU}
        \omega_1\omega_{2'} + (-1)^d\omega_{2'}\omega_{3'} - \omega_1\omega_{3'} - \omega_{3'}\psi_1 = 0.
    \end{equation}
    The case $b>1$ is analogous.
    The Poincaré polynomial of the complement is given by $P_\AAA(t)=(1+t)^{2a}+5(1+t)^a+6$.
    The four non-central circuits provide the following relations:
    \begin{align*}
        &\omega_2 \omega_{2'} =0, && \omega_3 \omega_{3'} =0, \\
        &\omega_1 \omega_2 \omega_{3'} =0, && \omega_1 \omega_{2'} \omega_{3} =0,
    \end{align*}
    by Corollary \ref{cor:omega_C_nullo}.
\end{example}

\section{Non-unimodular case}\label{Sec:5}

In this section we prove our main result on the cohomology of the complement of abelian arrangements, without any assumption of centrality and unimodularity.
%The poset of layers of a non-unimodular arrangement $\mathcal{A}$ is no longer a geometric lattice neither a semilattice, due to multiple connected components of the intersections.
%Indeed, the combinatorial data of $\mathcal{A}$ is encoded in a generalization of a matroid, basically a matroid together with a multiplicity function, known as \emph{arithmetic matroid} and first introduced by Moci and D'Adderio in \cite{DAdderioMoci}.
We adapt the argument in \cite{CDDMP2020} constructing an separating cover for an abelian arrangement $\mathcal{A}$.

\begin{definition}
    A \emph{separating cover} of $A \subseteq E$ is a cover $\pi \colon  \Hom(\Gamma, G) \to \Hom(\Lambda, G)$ such that the connected components of $\pi^{-1}(H_i)$ for $i \in A$ form a unimodular, possibly non-central, arrangement.
\end{definition}
A covering is induced from an extension $\Gamma \supset \Lambda$, for each character $\chi \in \Lambda$ let $a\in \mathbb{N}$ be biggest integer such that $\frac{\chi}{a} \in \Gamma$.
The cover separates $A$ if and only if the characters $\frac{\chi_i}{a_i}$ generate a split subgroup of $\Gamma$.
In general, there is no separating cover for $A$, but if $A$ has nullity at most one then it exists.

% \begin{definition}
%    Let $\Lambda$ be a lattice and $\mathcal{A}$ an arrangement in $\Hom (\Lambda,G)$, define the multiplicity 
%    $$m(A)=[\Lambda^A: \Lambda_A]$$
%    where $\Lambda_A:=\langle A\rangle\subseteq \Lambda$ and $\Lambda^A$ is the radical of $\Lambda_A$ in $\Lambda$, i.e. $\Lambda^A=(\mathbb{Q}\otimes_{\mathbb{Z}}\Lambda_A)\cap \Lambda$. This number $m(A)$ is the cardinality of the torsion subgroup of $\Lambda/\Lambda_A$ and the multiplicity function of the matroid associated to $\mathcal{A}$.
%\end{definition}
%Suppose $\mathcal{A}=\{\chi_0,\cdots, \chi_r\}$ is such that  
Indeed, let $X\subseteq E$ of nullity $1$, hence $X=C\sqcup F$ contains a unique circuit $C$ and set $n=\rk(C)$.
Let
\begin{align*}
    a_i=\begin{cases}m(X) \prod_{j\in C\setminus\{i\}}m(C\setminus\{j\}) &\text{ for } i\in C\\
    m(X) &\text{ for } i\in F\end{cases}
\end{align*}
and let $\Lambda'$ be the lattice in $\Lambda\otimes_{\mathbb{Z}} \mathbb{Q}$ generated by the characters $\frac{\chi_i}{a_i}$.
This cover is separating for $X$ of nullity $1$.

\begin{lemma}
The lattice $\Lambda'$ contains $\Lambda$ and the inclusion induces a covering $\pi \colon \Hom (\Lambda',G)\rightarrow \Hom (\Lambda,G)$ of degree $m(C)^a m(X)^{a(r-1)}\prod_{i\in C} m(C\setminus i)^{a(n-1)}$.
\end{lemma}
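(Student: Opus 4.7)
The plan is threefold: (i) verify the inclusion $\Lambda \subseteq \Lambda'$; (ii) recognise the induced restriction map $\pi\colon G^{\Lambda'} \to G^{\Lambda}$ as a covering of degree $[\Lambda':\Lambda]^a$; (iii) compute the integer $[\Lambda':\Lambda]$ explicitly.

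For (i), the hypothesis that the arrangement has rank $r$ forces $\Lambda$ to equal the saturation $\Lambda^X$, so $[\Lambda : \Lambda_X]=m(X)$. Given $\lambda\in\Lambda$, write $m(X)\lambda=\sum_{i\in X} n_i \chi_i$ with $n_i\in\mathbb{Z}$. The crucial observation is that $a_i/m(X)$ is a positive integer for every $i\in X$ (it equals $1$ for $i\in F$ and $\prod_{j\in C\setminus\{i\}} m(C\setminus\{j\})$ for $i\in C$), so
\[
\lambda \;=\; \sum_{i\in X} \frac{n_i\, a_i}{m(X)}\cdot\frac{\chi_i}{a_i} \;\in\; \Lambda'.
\]
For (ii), both lattices have rank $r$, hence $\Lambda'/\Lambda$ is finite; the kernel of $\pi$ is $\mathrm{Hom}(\Lambda'/\Lambda,G)$, which reduces to $\mathrm{Hom}(\Lambda'/\Lambda,(S^1)^a)\cong (\widehat{\Lambda'/\Lambda})^a$ because $\mathbb{R}^b$ is torsion-free. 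Pontryagin duality then gives cardinality $[\Lambda':\Lambda]^a$.

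For (iii), I would factor $[\Lambda':\Lambda]=[\Lambda':\Lambda_X]/m(X)$ and study the surjection $\sigma\colon\mathbb{Z}^X\twoheadrightarrow\Lambda'$ sending the $i$-th standard basis vector to $\chi_i/a_i$, together with the diagonal map $\psi\colon\mathbb{Z}^X\to\mathbb{Z}^X$ multiplying the $i$-th coordinate by $a_i$; then $\sigma\circ\psi$ has image exactly $\Lambda_X$. A short calculation using $\sigma^{-1}(\Lambda_X)=\psi(\mathbb{Z}^X)+\ker\sigma$ gives
\[
\Lambda'/\Lambda_X\;\cong\;\mathbb{Z}^X/\bigl(\psi(\mathbb{Z}^X)+\ker\sigma\bigr), \qquad [\Lambda':\Lambda_X]\;=\;\frac{\prod_{i\in X} a_i}{g},
\]
where $g$ is the order of $\ker\sigma$ modulo $\psi(\mathbb{Z}^X)$. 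The kernel $\ker\sigma$ is rank one; writing the primitive integer circuit relation as $\sum_{i\in C} v_i\chi_i=0$, one sees $\ker\sigma$ is cyclic generated by the primitive multiple of $(v_i a_i)_{i\in C}$ (extended by $0$ on $F$), whence $g=\gcd_{i\in C}(|v_i|a_i)$ follows by a direct argument using $\gcd_{i\in C}(v_i)=1$.

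The main obstacle is collapsing $g$ to a useable expression. From $[\Lambda_C:\Lambda_{C\setminus\{i\}}]=|v_i|$ and $\Lambda^{C\setminus\{i\}}=\Lambda^C$, one derives the matroid identity $|v_i|=m(C\setminus\{i\})/m(C)$ for $i\in C$. Substituting into the definition of $a_i$ yields the striking cancellation
\[
|v_i|\,a_i \;=\; \frac{m(X)}{m(C)}\,\prod_{j\in C} m(C\setminus\{j\}),
\]
which is independent of $i\in C$, so $g$ equals this common value. Combining with the straightforward expansion $\prod_{i\in X}a_i=m(X)^{r+1}\prod_{j\in C}m(C\setminus\{j\})^{n}$ and dividing by $g\,m(X)$ produces $[\Lambda':\Lambda]=m(C)\,m(X)^{r-1}\prod_{j\in C}m(C\setminus\{j\})^{n-1}$; raising to the $a$-th power gives the stated degree.
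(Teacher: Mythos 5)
Your argument is correct, and it verifies the degree formula by a route that differs from the paper's in a meaningful way. The paper fixes $i\in C$ and uses the \emph{basis} $X\setminus\{i\}$ as the common sublattice, writing
\[
[\Lambda':\Lambda]\;=\;\frac{[\Lambda':\Lambda_{X\setminus i}]}{[\Lambda:\Lambda_{X\setminus i}]}\;=\;\frac{\prod_{j\in X\setminus i}a_j}{m(X\setminus i)},
\]
and the numerator is immediate once one knows $\Lambda'$ is \emph{free} on $\{\chi_j/a_j : j\in X\setminus i\}$; the paper outsources this and the containment $\Lambda\subseteq\Lambda'$ to Lemmas 6.4--6.5 of \cite{CDDMP2020}. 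You instead take the whole set $X$ as generating set, factor through $\Lambda_X$ via $[\Lambda':\Lambda]=[\Lambda':\Lambda_X]/m(X)$, present $\Lambda'$ as $\mathbb{Z}^X/\ker\sigma$, and explicitly compute the correction factor $g=\gcd_{i\in C}(|v_i|a_i)$ arising from the circuit relation, collapsing it via the identity $|v_i|=m(C\setminus i)/m(C)$ (which is exactly $[\Lambda_C:\Lambda_{C\setminus i}]$). The two computations are consistent, and your containment argument $m(X)\lambda\in\Lambda_X$ together with $a_i/m(X)\in\mathbb{Z}$ is a clean replacement for the cited lemma. What the paper's choice buys is brevity: picking a basis avoids ever touching the relation vector $(v_i)$ or the saturation argument for $\ker\sigma$. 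What your choice buys is self-containment: everything, including the analogue of Lemma 6.4's inclusion, is proved from scratch, at the mild cost of the extra bookkeeping around $g$ and the primitive circuit vector.
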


\begin{proof}
    The proof of the first statement is analogous to that of \cite[Lemma 6.4]{CDDMP2020}. Let $i\in C$, as in \cite[Lemma 6.5]{CDDMP2020} we compute the index of $\Lambda$ in $\Lambda'$
    \begin{align*}
        [\Lambda': \Lambda]&=\frac{[\Lambda':\Lambda_{X\setminus i}]}{[\Lambda: \Lambda_{X\setminus i}]}=\frac{\prod_{j\in X\setminus i}a_j}{m(X\setminus i)}=\frac{m(C)m(X)^r\prod_{j\in C\setminus i}\prod_{l \in C\setminus j}m(C\setminus l)}{m(X)m(C\setminus i)}\\
        &=m(C)m(X)^{r-1}\prod_{i\in C}m(C\setminus i)^{n-1}
    \end{align*}
    It follows that the degree of the covering is $[\Lambda':\Lambda]^a$.
\end{proof}

Let $\mathcal{A}_U$ be the arrangement in $U:=\Hom (\Lambda',G)$ given by the set of connected components of the preimages of the subvarieties in $\mathcal{A}$
$$\mathcal{A}_U=\bigcup_{H\in \mathcal{A}} \pi_0(\pi^{-1}(H)).$$ 

\begin{lemma}[{\cite[Lemma 6.7]{CDDMP2020}}] The arrangement $\mathcal{A}_U$ is unimodular. 
\end{lemma}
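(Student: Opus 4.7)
My plan is to verify unimodularity by showing that for every subset $A \subseteq X$ the multiplicity $m'(A)$, computed with respect to the lattice $\Lambda'$ and the lifted characters $\chi_i/a_i$, equals $1$. Since the subvarieties of $\AAA_U$ are the connected components of the preimages $\pi^{-1}(H)$, the characters of $\AAA_U$ are (up to primitivity in $\Lambda'$) the scaled characters $\chi_i/a_i$, and unimodularity is exactly the condition that the sublattice they generate in $\Lambda'$ coincides with its saturation, i.e.\ $m'(A)=1$ for every $A$.

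First I reduce the verification to two families: the single circuit $C$ and the bases $B = X \setminus i$ for $i \in C$. Every other subset is either independent, hence contained in such a basis and handled by compatibility of the multiplicity function under restriction, or contains $C$ and factors through $m'(C)$ and $m'(B)$ by contraction. For $B = X \setminus i$ the saturation is $\Lambda'^B = \Lambda'$, so $m'(B) = [\Lambda' : \Lambda'_B]$; expanding $\Lambda'_B = \langle \chi_j / a_j : j \in B\rangle$ and combining the index formula $[\Lambda' : \Lambda] = m(C)\, m(X)^{r-1} \prod_{j \in C} m(C \setminus j)^{n-1}$ from the preceding lemma with $[\Lambda : \Lambda_B] = m(B)$, the factors of $m(X)$ and $m(C \setminus j)$ contributed by the $a_j$ cancel exactly, leaving index $1$. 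For the circuit $C$, the scaled generators sit in $\Lambda'^C = (\mathbb{Q}\Lambda_C) \cap \Lambda'$, and the extra factors $\prod_{j \in C \setminus i} m(C \setminus j)$ in each $a_i$ are calibrated to absorb precisely the $m(C)$-torsion that $\Lambda_C$ has inside $\Lambda^C$, forcing $m'(C)=1$.

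The main obstacle is the bookkeeping across three nested lattice extensions $\Lambda_A \subseteq \Lambda^A \subseteq \Lambda \subseteq \Lambda'$, each with its own saturation: one must verify that the arithmetic constants $a_i$ are chosen so that every relevant index reduces to $1$ simultaneously. This is precisely the content of \cite[Lemma 6.7]{CDDMP2020}, and the presence of the $\mathbb{R}^b$ factor in $G$ does not interfere: it affects only the degree of the covering of $G$-points (explaining the exponent $a$ rather than $a+b$ in the preceding lemma) and not the lattice combinatorics on which unimodularity depends.
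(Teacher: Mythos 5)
The paper gives no proof of this lemma at all; it is imported verbatim as \cite[Lemma 6.7]{CDDMP2020}, so there is no ``paper's own proof'' to compare against. Your reconstruction of the argument is essentially sound, and your final remark --- that the $\mathbb{R}^b$ factor affects only the covering degree, not the lattice combinatorics --- is exactly the reason the cited statement transfers without change.

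Two small comments on the skeleton of your argument. First, the computational heart of the basis case deserves to be made explicit: writing the primitive $\mathbb{Z}$-dependency of the circuit as $\sum_{j\in C} c_j\chi_j = 0$, one has $|c_j| = m(C\setminus j)/m(C)$, and a direct check shows $\frac{c_j a_j}{c_i a_i} = \pm 1$ for all $i,j\in C$; this is precisely why $\chi_i/a_i$ lies in $\Lambda'_{X\setminus i}$, giving $\Lambda'_{X\setminus i}=\Lambda'$ and hence $m'(X\setminus i)=1$. Your phrase ``the factors of $m(X)$ and $m(C\setminus j)$ cancel'' glosses over the crucial appearance of the $c_j$'s, which is where the multiplicities genuinely interact. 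Second, the separate verification for the circuit $C$ is not needed and slightly muddies the reduction. Once $m'(B)=1$ for every basis $B=X\setminus i$ with $i\in C$, every independent $A$ satisfies $m'(A)=1$ (extend $A$ to a basis and use that a quotient of $\Lambda'$ by a saturated sublattice has no torsion), and every dependent $A\supseteq C$ satisfies $m'(A)=1$ because for any $i\in C$ the set $A\setminus i$ is independent of the same rank, so $\Lambda'_{A\setminus i}\subseteq\Lambda'_A\subseteq\Lambda'^{A\setminus i}$ gives $m'(A)\,\big[\Lambda'_A:\Lambda'_{A\setminus i}\big]=m'(A\setminus i)=1$, forcing $m'(A)=1$. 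So the circuit check is automatically subsumed, and your appeal to ``absorbing the $m(C)$-torsion'' there, while morally correct, is an unnecessary extra step.
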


Let $H_i \in \mathcal{A}$, $L$ a connected component of $\pi^{-1}(H_i)$ and $q \in L$. The subvariety $L$ has equation $\frac{\hat{\chi_i}}{a_i}=\frac{\hat{\chi_i}}{a_i}(q)$, where $\hat{\chi}=\chi\circ\pi$. 
We define
\begin{align}
    & \omega^U_i(q)=\bigg( \frac{\hat{\chi_i}}{a_i}-\frac{\hat{\chi_i}}{a_i}(q)\bigg)\bigg|_{M(\mathcal{A}_U)}^*(\omega) \quad \in H^{a+b-1}(M(\mathcal{A}_U)), \\
    & \psi_i^{j,U}=\bigg(\frac{\hat{\chi_i}}{a_i}\bigg)\bigg|_{M(\mathcal{A}_U)}^*(\psi^j) \quad \in H^{1}(M(\mathcal{A}_U)), \\ 
    & \psi_i^U=\bigg(\frac{\hat{\chi_i}}{a_i}\bigg)\bigg|_{M(\mathcal{A}_U)}^*(\psi) \quad \in H^{a}(M(\mathcal{A}_U)).
\end{align}

\begin{remark}
For $H_i \in \mathcal{A}$, let $L$ be a connected component of $\pi^{-1}(H_i)$ and $p,q \in L$. Then we have $\frac{\hat{\chi_i}}{a_i}(q)=\frac{\hat{\chi_i}}{a_i}(p)$ and hence $\omega^U_i(q)=\omega^U_i(p)$.
\end{remark}

As in Section \ref{Sec:3}, for $I \subseteq E$, we write $\omega_I^U=\omega_{i_1}^U\omega_{i_2}^U \dots \omega_{i_k}^U$ and $\psi_I^U=\psi_{i_1}^U\psi_{i_2}^U \dots \psi_{i_k}^U$ where $I=\{i_1,i_2, \dots, i_k\}$ and the product is taken in the order induced by the ordered groundset $E$.

Let $A\subseteq E$ be a independent subset, $W$ a connected component of $\bigcap_{i\in A}H_i$ and $p\in W$. Since $\pi^*$ is injective, we define $\omega_{W,A}$ as the unique class in $ H^*(M(\mathcal{A});\mathbb{Z})$ such that
\begin{align}
    \pi^*(\omega_{W,A})=\frac{1}{|L\cap \pi^{-1}(p)|}\sum_{q\in \pi^{-1}(p)}\omega_A^U(q) = 
    %\frac{1}{|L\cap \pi^{-1}(p)|}
    \sum_{\tilde{L} \text{ c.c. of } \pi^{-1}(W)}\omega_A^U(q_{\tilde{L}})
\end{align}
where $L$ is any connected component of $\pi^{-1}(W)$ and $q_{\tilde{L}} \in \tilde{L}\cap\pi^{-1}(p)$ for each connected component $\tilde{L}$ of $\pi^{-1}(W)$.
For any $A,B\subseteq E$ disjoint, we denote
$$\eta^U_{A,B}(q)=(-1)^{d\ell(A,B)}\omega_A^U(q)\psi_B^U\in H^*(M(\mathcal{A}_U))$$
and $$\eta_{W,A,B}=(-1)^{d\ell(A,B)}\omega_{W,A}\psi_B\in H^*(M(\mathcal{A})).$$
The classes $\omega_{W,A}$ do not depend on the choice of the cover $\pi\colon U \to G^r$ as in \cite[Theorem 5.4]{CDDMP2020}.

\begin{lemma}\label{lemma: rel1}
    Let $A$ be an independent set and $W$ a connected component of $\bigcap_{i\in A}H_i$.
    Then, for every $z\in \ker \left( H^*(G^r;\mathbb{Z}) \to H^*(W;\mathbb{Z}) \right)$ 
    \[ \omega_{W,A}z=0.\]
    In particular, if $B\subseteq E$ is dependent on $A$ then for any $j=1,\cdots, a$,
    \begin{equation*}
        \omega_{W,A}\psi^j_B=0.
    \end{equation*}
%
%    Let $A,B\subseteq E$ such that $A\cup B$ is a dependent set and let $W$ be a connected component of $\bigcap_{i\in A}H_i$. Then, for any $j=1,\cdots, a$,
%    \begin{equation*}
%        \omega_{W,A}\psi^j_B=0.
%    \end{equation*}
\end{lemma}
\begin{proof}
    If $A$ is a unimodular subset then $\omega_{W,A}= \omega_{a_1} \omega_{a_2} \dots \omega_{a_k}$ and $\omega_{W,A}\psi_i^j=0$ for every $i \in A$ and any $j\leq a$.
    Such elements $\psi_i^j$ generates $\ker \left( H^*(G^r;\mathbb{Z}) \to H^*(W;\mathbb{Z}) \right)$ as an ideal and hence $\omega_{W,A}z=0$.

    In the non unimodular case, we consider the separating cover $\mathcal{A}_U$ and let $L$ be a connected component of $\pi^{-1}(W)$.
    Since $\pi^*$ is injective, it is enough to prove that $\omega^U_A(q_L)z^U=0$ for any $z^U \in \ker \left( H^*(U;\mathbb{Z}) \to H^*(L;\mathbb{Z}) \right)$, because $\pi^*(\omega_{W,A})$ is a linear combination of $\omega^U_A(q_L)$.
    The arrangement $\mathcal{A}_U$ is unimodular and the result follows as in the previous case.

    The last statement follows from the fact that $\psi_B^j \in \ker \left( H^*(G^r;\mathbb{Z}) \to H^*(W;\mathbb{Z}) \right)$ if (and only if) the set $B$ is dependent on $A$.
\end{proof}

\begin{lemma}\label{lemma3} Let $X$ be a set of nullity $1$ and $A,B\subseteq X$ such that $A\sqcup B$ is a maximal independent set. Let $W$ be a connected component of $\bigcap_{i\in A}H_i$ and $p\in W$. Then,
\begin{equation*}
    \pi^*(\eta_{W,A,B})=\frac{ m(A\cup B)^a}{m(A)^a}\sum_{q\in \pi^{-1}(p)}\eta_{A,B}^U(q)
\end{equation*}    
\end{lemma}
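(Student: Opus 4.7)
The plan is to expand $\pi^*(\eta_{W,A,B}) = (-1)^{d\ell(A,B)}\pi^*(\omega_{W,A})\cdot \pi^*(\psi_B)$ using the defining formula for $\omega_{W,A}$ and compare with the claimed expression. First I would compute $\pi^*(\psi_B)$. Since $\psi^j = p^*(\eta_j)$, where $p\colon G\to(S^1)^a$ is the projection onto the torus factor and $\eta_j$ is the $j$-th standard generator of $H^1(S^1)$, the class $\psi = p^*(\eta_1\cdots\eta_a)$ extends to $H^a(G)$. The factorization $\hat{\chi}_i = \mu_{a_i}\circ \phi_i$ (with $\phi_i = \hat{\chi}_i/a_i$), together with $\mu_{a}^*\psi = a^a\psi$ (since $\mu_a\colon S^1\to S^1$ has degree $a$), gives
\[ \pi^*(\psi_i) = \hat\chi_i^*(\psi) = a_i^a\phi_i^*(\psi) = a_i^a\psi_i^U, \qquad \text{so} \qquad \pi^*(\psi_B) = \Bigl(\prod_{i\in B}a_i\Bigr)^a \psi_B^U. \]

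Next I would compute $|L\cap\pi^{-1}(p)|$, the integer appearing in the formula for $\pi^*(\omega_{W,A})$. The deck group $K\cong\mathrm{Hom}(\Lambda'/\Lambda,(S^1)^a)$ of $\pi$ has order $[\Lambda':\Lambda]^a$ and acts by translation on $G^{\Lambda'}$, transitively on every fibre of $\pi$ and hence on the connected components of $\pi^{-1}(W)$. By orbit-stabilizer, $|L\cap\pi^{-1}(p)| = [\Lambda':\Lambda]^a/N$, where $N$ is the number of components of $\pi^{-1}(W)$. Using the unimodularity of $\mathcal{A}_U$, the components of $\pi^{-1}(\bigcap_{i\in A}H_i) = \bigcap_{i\in A}\phi_i^{-1}(\mu_{a_i}^{-1}(g_i))$ are parameterized by tuples $(\tilde g_i)_{i\in A}\in \prod_{i\in A}\mu_{a_i}^{-1}(g_i)$, giving $\prod_{i\in A}a_i^a$ components in total; since $\bigcap_{i\in A}H_i$ has $m(A)^a$ components, $N = \prod_{i\in A}a_i^a/m(A)^a$ and
\[ |L\cap\pi^{-1}(p)| = \frac{[\Lambda':\Lambda]^a\, m(A)^a}{\prod_{i\in A}a_i^a}. \]

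The last ingredient is the arithmetic identity $[\Lambda':\Lambda]\cdot m(A\cup B) = \prod_{i\in A\cup B}a_i$. Unimodularity of $\mathcal{A}_U$ implies that $\{\chi_i/a_i : i\in A\cup B\}$ is a $\mathbb{Z}$-basis of $\Lambda'$, so $[\Lambda':\Lambda_{A\cup B}] = \prod_{i\in A\cup B} a_i$; combining with $[\Lambda:\Lambda_{A\cup B}] = m(A\cup B)$ and multiplicativity of indices yields the identity. Assembling the three computations gives
\[ \pi^*(\eta_{W,A,B}) = \frac{(\prod_{i\in B}a_i)^a}{|L\cap\pi^{-1}(p)|}\sum_{q\in\pi^{-1}(p)}\eta_{A,B}^U(q) = \frac{\prod_{i\in A\cup B}a_i^a}{[\Lambda':\Lambda]^a m(A)^a}\sum_q\eta_{A,B}^U(q) = \frac{m(A\cup B)^a}{m(A)^a}\sum_q\eta_{A,B}^U(q). \]
The main obstacle will be the connected-component counting in the second step; the pullback of $\psi_B$ is formal once $\psi$ is seen to extend to $H^a(G)$, and the arithmetic identity is a direct combinatorial consequence of unimodularity.
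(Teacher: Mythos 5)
Your proposal is correct and follows the paper's overall plan — pull back $\psi_B$, compute $|L\cap\pi^{-1}(p)|$ as (degree of cover)/(number of components of $\pi^{-1}(W)$), and simplify the ratio — but you handle the final simplification by a different, and cleaner, route. The paper substitutes the explicit definitions of the $a_i$ in terms of $m(X)$ and $m(C\setminus j)$, crunches through a large product, and invokes property $(3)$ of arithmetic matroids to identify $m(X)\,m((A\cup B)\cap C)/m(C)$ with $m(A\cup B)$. You instead observe the identity $[\Lambda':\Lambda]\cdot m(A\cup B)=\prod_{i\in A\cup B}a_i$, which follows immediately from unimodularity of $\mathcal{A}_U$: since $\{\chi_i/a_i:i\in A\cup B\}$ is a $\mathbb{Z}$-basis of $\Lambda'$ for any maximal independent $A\sqcup B$, $[\Lambda':\Lambda_{A\cup B}]=\prod_{i\in A\cup B}a_i$, while $[\Lambda:\Lambda_{A\cup B}]=m(A\cup B)$. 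This is the same lattice-index identity the paper already exploits (for the special basis $X\setminus\{i\}$) when it computes the degree of the cover in the preceding lemma, so your argument unifies the two computations and sidesteps the arithmetic-matroid axiom entirely. The one place where you should be slightly more careful is the component count: to say that the components of $\pi^{-1}(\bigcap_{i\in A}H_i)$ are parameterized by \emph{all} tuples $(\tilde g_i)\in\prod_{i\in A}\mu_{a_i}^{-1}(g_i)$, you need each $\bigcap_{i\in A}\phi_i^{-1}(\tilde g_i)$ to be nonempty and connected; both hold because, by unimodularity of $\mathcal{A}_U$, the characters $\{\chi_i/a_i:i\in A\}$ extend to a $\mathbb{Z}$-basis of $\Lambda'$, so the map $G^{\Lambda'}\to G^{|A|}$ they define is a split surjection with connected fibres. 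With that justification spelled out, the argument is complete and matches the paper's answer.
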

\begin{proof}
    By definition, 
    $$\pi^*(\psi_B)=\big(\prod_{i\in B}a_i^a\big)\psi_B^U$$
    The following computation of $|L\cap \pi^{-1}(p)|$ is analogous to that in \cite[Lemma 6.10]{CDDMP2020}.
    The cardinality of the inverse image of $p$ is the degree of the covering. The number of connected components of $\pi^{-1}(W)$ is $$([\Lambda(A):\Lambda^A])^a=\bigg(\frac{\prod_{i\in A}a_i}{m(A)}\bigg)^a$$
    Therefore, 
    \begin{align*}
    |L\cap\pi^{-1}(p)|&=\frac{m(C)^am(X)^{a(r-1)}\prod_{i\in C}m(C\setminus i)^{a(n-1)}m(A)^a}{\prod_{i\in A}a_i^a}\\
    &=\frac{m(C)^a m(X)^{a(r-1)}\prod_{i\in C}m(C\setminus i)^{a(n-1)}m(A)^a}{m(X)^{a|A|}\prod_{i\in A\cap C}\prod_{j\in C\setminus i}m(C\setminus j)^a}\\
    &=m(C)^a m(X)^{a(r-|A|-1)}m(A)^a\prod_{i\in C}m(C\setminus i)^{a(n-|A\cap C| -1)}\prod_{i\in A \cap C}m(C\setminus i )^a 
    \end{align*}
    It follows that
    \begin{align*}
        \pi^*(\eta_{W,A,B})=\frac{\prod_{i\in B}a_i^a}{|L\cap\pi^{-1}(p)|}(-1)^{d\ell(A,B)}\sum_{q\in \pi^{-1}(p)}\omega_A^U(q)\psi_B^U
    \end{align*}
    where,
    \begin{align*}
        \frac{\prod_{i\in B}a_i^a}{|L\cap\pi^{-1}(p)|}&=\frac{m(X)^{a|B|}\prod_{i\in C}m(C\setminus i)^{a|B\cap C|}}{\prod_{i\in B\cap C} m(C\setminus i )^a}\cdot \frac{1}{|L\cap\pi^{-1}(p)|}\\
        &=\frac{m(X)^{a(|A|+|B|-r+1)}\prod_{i\in C}m(C\setminus i)^{a(|B\cap C|+|A\cap C|-n+1)}}{m(C)^am(A)^a\prod_{i\in A\cap C}m(C\setminus i)^a\prod_{i\in B\cap C}m(C\setminus i)^a}\\
        &=\frac{m(X)^a\prod_{i\in C}m(C\setminus i)^a}{m(C)^am(A)^a\prod_{i\in(A\cup B)\cap C}m(C\setminus i)^a}\\
        &=\frac{m(X)^am((A\cup B)\cap C)^a}{m(C)^am(A)^a}=\frac{m(A\cup B)^a}{m(A)^a}
    \end{align*}
    In this last computation the final equality comes from property \ref{item:AM3} of arithmetic matroids.
\end{proof}

\begin{lemma}
    \label{lemma:prod_omega_in_cohomology}
    We have
        \begin{equation}
    \label{eq:prod_gen_cohomology}
        \omega_{W,A}\omega_{W',A'}=\begin{cases}
            0 \quad \text{ if } A,A' \text{ are not disjoint or } A\sqcup A' \text{ dependent,}\\
            (-1)^{d\ell(A, A')}\sum_{L \text{ c.c.\ of } W\cap W'}\omega_{L,A\sqcup A'} \quad \text{ otherwise.}
        \end{cases}
    \end{equation}
\end{lemma}
\begin{proof}
    We start with the case of $A\sqcup A'$ of nullity at most one.
    Consider any separating cover $\pi \colon U \to G^r$ of $A \sqcup A'$, we have 
    \begin{align*}
        \pi^*(\omega_{W,A}\omega_{W',A'}) &= \sum_{L \in \pi_0(\pi^{-1}(W))} \omega_{A}^U(q_L) \sum_{L' \in \pi_0(\pi^{-1}(W'))} \omega_{A'}^U(q_{L'}) \\
        &= \sum_{K \in \pi_0(\pi^{-1}(W \cap W'))} \omega_A^U(q_K) \omega_{A'}^U(q_K) \\
        &= (-1)^{d\ell(A,A')} \sum_{L \in \pi_0(W \cap W')} \sum_{K \in \pi_0(\pi^{-1}(L))} \omega_{A \sqcup A' }^U(q_K) \\
        &= (-1)^{d\ell(A,A')} \sum_{L \in \pi_0(W \cap W')} \pi^*( \omega_{L,A \sqcup A' })
    \end{align*}
    if $A \cap A'= \emptyset$, otherwise in the second line above we have $\omega_A^U(q_K) \omega_{A'}^U(q_K)=0$ because $(\omega_i^U(q_K))^2=0$.
    By injectivity of $\pi_*$ and Corollary \ref{cor:omega_C_nullo} the claimed equality follows.

    Now consider arbitrary independent sets $A$ and $A'$ and fix a subset $A \subset X \subseteq A \cap A'$ of nullity one.
    Consider a separating cover $f \colon U \to G^r$ for $A'$ and a separating cover $g \colon V \to U$ for $f^{-1}(X)$.
    Notice that $f \circ g$ is a separating cover for $X$.
    We have 
    \[ f^*(\omega_{W',A'})= \sum_{L' \in \pi_0(f^{-1}(W'))} \omega_{A'}^U(q_{L'})\]
    and it is enough to prove that $f^*(\omega_{W,A})\omega_{A' \cap X}^U(q_{L'})=0$ for all $L' \in \pi_0(f^{-1}(W'))$.
    We apply $g^*$ to obtain
    \[ g^*(f^*(\omega_{W,A})\omega_{A' \cap X}^U(q_{L'}))= \sum_{L \in \pi_0(g^{-1}f^{-1}(W))} \omega_A^V(q_L) \sum_{K \in \pi_0 (g^{-1}(L'))} \omega_{A'\cap X}^V(q_K)\]
    but all products $\omega_A^V(q_L)\omega_{A'\cap X}^V(q_K)$ are zero because $X$ is a dependent set, see Corollary \ref{cor:omega_C_nullo}.
    This complete the proof.
\end{proof}

\begin{lemma}\label{lemma: circuitrel} Let $X=C\sqcup F$ be a central set of nullity $1$ and let $C$ be the unique oriented circuit contained in $X$, $Y$ be a connected component of $\bigcap_{i\in X}H_i$, then
\begin{itemize}
\item If $b=1$,
        \begin{align}\label{eq:nonuni1}
        \begin{split}
        \sum_{\substack{K\subseteq C^- \\
        K\neq\emptyset}}(-1)^{|K|}c_{i_K}^d\frac{m(X\setminus K)^a}{m(X\setminus i_K)^a}\eta_{W,X\setminus K,K\setminus i_K}&\\
        -\sum_{\substack{K\subseteq C^+\\ K\neq \emptyset}}(-1)^{|K|}c_{i_K}^d\frac{m(X\setminus K)^a}{ m(X\setminus i_K)^a}\eta_{W,X\setminus K, K\setminus i_K}&=0,
        \end{split}
        \end{align}
        where $i_K\in K$ and, for each summand, $W$ is the connected component of $\bigcap_{i\in X\setminus K}H_i$ such that $Y\subseteq W$.
\item If $b>1$ and $d$ odd,
    \begin{align}\label{eq: nonuni2}
        \sum_{i\in C}(-1)^{|C_{<i}|}%m(X\setminus i)^{a-1} 
        \omega_{Y,X\setminus i}=0.
    \end{align}
\item If $b>1$ and $d$ even,
    \begin{align}\label{eq: nonuni3}
        \sum_{i\in C^-}\omega_{Y,X\setminus i} - \sum_{i\in C^+}\omega_{Y,X\setminus i}=0.
    \end{align}
\end{itemize}
\end{lemma}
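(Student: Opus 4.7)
\medskip

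\noindent\textbf{Proof plan.}
The natural strategy is to lift the claimed identity to the unimodular cover $\pi\colon U\to G^\Lambda$ constructed above, where $\AAA_U$ is unimodular and \Cref{lemma2} applies directly. Since $\pi$ is a covering, $\pi^*\colon H^*(M(\AAA))\to H^*(M(\AAA_U))$ is injective, so it suffices to show that $\pi^*$ of the left-hand side of \eqref{eq:nonuni1}, \eqref{eq: nonuni2}, or \eqref{eq: nonuni3} vanishes in $H^*(M(\AAA_U))$. The coefficients $\frac{m(X\setminus K)^a}{m(X\setminus i_K)^a}$ appearing in \eqref{eq:nonuni1} are exactly what is needed to make the multiplicity factors from \Cref{lemma3} cancel.

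The first step is to apply \Cref{lemma3} to each summand: writing $W_K$ for the connected component of $\bigcap_{i\in X\setminus K}H_i$ containing $Y$, and choosing $p_K\in W_K$, the fact that $A\sqcup B=(X\setminus K)\sqcup(K\setminus i_K)=X\setminus i_K$ gives
\[
\pi^*\bigl(\eta_{W_K,X\setminus K,K\setminus i_K}\bigr)
=\frac{m(X\setminus i_K)^a}{m(X\setminus K)^a}\sum_{q\in \pi^{-1}(p_K)}\eta^U_{X\setminus K,K\setminus i_K}(q).
\]
Multiplying by the weight $\frac{m(X\setminus K)^a}{m(X\setminus i_K)^a}$ in the statement, the multiplicity ratio disappears. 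After compatibly choosing a common base point $p\in Y$ (so that $p_K$ can be taken to be the image of $p$ in $W_K$), $\pi^*$ of \eqref{eq:nonuni1} becomes an unweighted sum indexed by $q\in\pi^{-1}(p)$. Analogously, for $b>1$ the relations \eqref{eq: nonuni2} and \eqref{eq: nonuni3} carry no multiplicities because the corresponding $B$ is empty and $m(A\cup B)=m(A)$.

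The second step is a local reduction. The classes $\omega^U_i(q)$ depend only on the connected component of $\pi^{-1}(H_i)$ containing $q$; fixing a connected component $\tilde L$ of $\pi^{-1}(Y)$ and a basepoint $q_{\tilde L}\in\tilde L$, the summand at $q_{\tilde L}$ is supported on the sub-arrangement of $\AAA_U$ consisting of the specific components $\{L_i\}_{i\in X}$ through $q_{\tilde L}$. This sub-arrangement is unimodular (it sits inside $\AAA_U$), and the characters $\{\chi_i/a_i\}_{i\in X}$ reproduce the generalized circuit $X=C\sqcup F$ with the same oriented circuit $C=C^+\sqcup C^-$. Applying \Cref{lemma2} to the circuit $C$ in this central unimodular sub-arrangement (using the remark after \Cref{lemma2} if the global arrangement $\AAA_U$ is non-central), and then multiplying both sides by $\omega^U_F(q_{\tilde L})$, produces precisely the local summand of $\pi^*$ of the desired relation. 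Since this is zero for every $\tilde L$, summing over all connected components of $\pi^{-1}(Y)$ yields $\pi^*(\mathrm{LHS})=0$, whence $\mathrm{LHS}=0$ by injectivity. The case $b>1$ with $d$ odd uses the second bullet of \Cref{lemma2}, the case $b>1$ with $d$ even the third.

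The main obstacle I expect is the sign bookkeeping, rather than anything conceptual. When multiplying the identity of \Cref{lemma2} by $\omega^U_F(q_{\tilde L})$, one must reorder the product $\omega^U_{C\setminus K}\cdot\omega^U_F$ into the ground-set order to obtain $\omega^U_{X\setminus K}$, and similarly track the signs in $\eta^U_{\cdot,\cdot}$ arising from the definition of $\ell(\cdot,\cdot)$ and from $(-1)^{|C_{<i}|}$ in the $d$ odd case. One must check that these local signs are independent of $K$ in a way that allows a common factor to be pulled out of the sum (so that the identity in \Cref{lemma2} applies verbatim). Once this is verified, the argument reduces term-by-term to the unimodular relations already established.
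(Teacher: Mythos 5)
Your proof is correct and follows essentially the same route as the paper: apply \Cref{lemma2} for each $q\in\pi^{-1}(p)$, multiply by $\omega^U_F(q)$ to pass from the circuit $C$ to the generalized circuit $X$, sum over the fiber, invoke \Cref{lemma3} to identify the sum as a pullback (with the multiplicity weights exactly canceling the $\frac{m(X\setminus i_K)^a}{m(X\setminus K)^a}$ factor), and conclude by injectivity of $\pi^*$. You merely run the argument in the opposite direction (computing $\pi^*$ of the claimed relation and decomposing it) rather than synthesizing the relation from the lifted identities, which is an equivalent presentation of the same computation.
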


\begin{proof}
We apply Lemma \ref{lemma2} to the circuit $C$ in the abelian variety $U=\Hom (\Lambda',G)$. Let $p\in Y$ be any point.
If $b=1$, we have:
    \begin{align*}
        \sum_{\substack{K\subseteq C^-\\ K\neq \emptyset}}(-1)^{|K|}c_{i_K}^d\eta^U_{C\setminus K,K\setminus i_K}(q)-\sum_{\substack{K\subseteq C^+\\ K\neq \emptyset}}(-1)^{|K|}c_{i_K}^d\eta^U_{C\setminus K, K\setminus i_K}(q)=0
    \end{align*}
    for all $q\in \pi^{-1}(p)$. We multiply this equation by $\omega^U_F(q)$, and we get:
        \begin{align*}
        \sum_{\substack{K\subseteq C^-\\ K\neq \emptyset}}(-1)^{|K|}c_{i_K}^d\eta^U_{X\setminus K,K\setminus i_K}(q)-\sum_{\substack{K\subseteq C^+\\ K\neq \emptyset}}(-1)^{|K|}c_{i_K}^d\eta^U_{X\setminus K,K\setminus i_k}(q)=0.
    \end{align*}
    Summing over all $q\in\pi^{-1}(p)$, we obtain:
    \begin{align*}
        0=&\sum_{q\in\pi^{-1}(p)}\sum_{\substack{K\subseteq C^-\\ K\neq \emptyset}}(-1)^{|K|}c_{i_K}^d\eta^U_{X\setminus K,K\setminus i_K}(q) 
        -\sum_{q\in\pi^{-1}(p)}\sum_{\substack{K\subseteq C^+\\ K\neq\emptyset}}(-1)^{|K|}c_{i_K}^d\eta^U_{X\setminus K,K\setminus i_K}(q)\\
        =&\sum_{\substack{K\subseteq C^-\\ K\neq\emptyset}}(-1)^{|K|}c_{i_K}^d\sum_{q\in\pi^{-1}(p)}\eta^U_{X\setminus K,K\setminus i_K}(q)-\sum_{\substack{K\subseteq C^+\\ K\neq\emptyset}}(-1)^{|K|}c_{i_K}^d\sum_{q\in\pi^{-1}(p)}\eta^U_{X\setminus K, K\setminus i_K}(q)\\
        =&\sum_{\substack{K\subseteq C^-\\ K\neq \emptyset}}(-1)^{|K|}c_{i_K}^d\frac{m(X\setminus K)^a}{m(X\setminus i_K)^a}\pi^*(\eta_{W,X\setminus K,K\setminus i_K})\\
        &-\sum_{\substack{K\subseteq C^+\\ K\neq\emptyset}}(-1)^{|K|}c_{i_K}^d\frac{m(X\setminus K)^a}{ m(X\setminus i_K)^a}\pi^*(\eta_{W,X\setminus K, K\setminus i_K}).
    \end{align*}
    Since $\pi^*$ is injective, equation \eqref{eq:nonuni1} follows. 

    If $b>1$, through an analogous computation, equations \eqref{eq: nonuni2} and \eqref{eq: nonuni3} hold.

    %If $b>1$, then the identity \begin{align*} \sum_{i\in C} c_i \omega^U_{C\setminus i}(q)=0 \end{align*} holds for all $q\in \pi^{-1}(p)$.Multiplying this equation by $\omega^U_F(q)$ we get: \begin{align*} \sum_{i\in C} c_i \omega^U_{X\setminus i}(q)=0 \end{align*} Summing over all $q\in \pi^{-1}(p)$, we obtain: \begin{align}\label{relb} 0=&\sum_{i\in C} c_i \sum_{q\in\pi^{-1}(p)}\omega^U_{X\setminus i}(q)=\sum_{i\in C}c_i \lvert L\cup \pi^{-1}(p)\rvert \pi^*(\omega_{W,x\setminus i}) \end{align} where $L$ is any connected component of $\pi^{-1}(W)$. 
    %By the same computation performed in the proof of Lemma \ref{lemma3}, we have\begin{align*}|L\cup \pi^{-1}(p)|=\frac{m(C)^a m(X\setminus i)^a \prod_{j\in C\setminus i}m(C\setminus j)^a}{m(X)^a\prod_{j\in C}m(C\setminus j)^a}=\frac{m(C)^am(X\setminus i)^a}{m(X)^am(C\setminus i)^a}=1\end{align*}
    %Then, since $\pi^*$ is injective, Equation \eqref{eq: nonuni2} follows.
\end{proof}

\begin{example}\label{ex: NCNU}
\begin{figure}
    \centering
\begin{tikzpicture}
    \draw[black, thick] (-1,0) -- (3,0);
    \draw[black, thick] (0,-2) -- (0,2);
    \draw[black, thick] (-1,2) -- (1,-2);
    \draw[black, thick] (1,2) -- (3,-2);
    %\filldraw[black] (4,0) circle (0.0000001pt) node[anchor=west]{$2$};
    %\filldraw[black] (0,-2) circle (0.0000001pt) node[anchor=east]{$1$};
    %\filldraw[black] (2,-2) circle (0.0000001pt) node[anchor=north]{$3$};
    %\filldraw[black] (4,-1) circle (0.0000001pt) node[anchor=north]{$3$};
    \draw[->] (2.8,0) -- (2.8,0.25); {}
    \draw[->] (0,1.8) -- (0.25,1.8); {}
    \draw[->] (-0.8,1.6) -- (-0.6,1.7); {}
    \draw[->] (1.2,1.6) -- (1.4,1.7); {}
    \node at (-0.3,-1.7){$H_1$};
    \node at (2.8,-0.3){$H_2$};
    \node at (1,1.4){$H_3$};
    
\end{tikzpicture}
\quad \quad \quad \quad
\begin{tikzpicture}[scale=1.3]
\draw[cyan, thick] (-1.5,-1.5) -- (-1.5,1.5);
\draw[black, thick] (1.5,-1.5) -- (1.5,1.5);
\draw[black, thick] (-1.5,-1.5) -- (1.5,-1.5);
\draw[black, thick] (-1.5,1.5) -- (1.5,1.5);
\draw[red, thick] (-1.5,1.5) -- (0,-1.5);
\draw[red, thick] (0,1.5) -- (1.5,-1.5);
\draw[green, thick] (-1.5,-1.5) -- (1.5,-1.5);
%\filldraw[black] (-1.5,-1.5) circle (0.5pt);
\end{tikzpicture}
    \caption{A picture of the non-unimodular circuit $\{1,2,3\}$ in Example \ref{ex: NCNU} when $(a,b)=(1, 0)$ on the left, and $(a,b)=(0,1)$ on the right.}
    \label{fig:Example3}
\end{figure}
    Let us consider the arrangement $\mathcal{A}=\{H_i\}_{i=1,2,3,4}$ in $G^3$ associated to the matrix
    $$\begin{pmatrix}
        1 & 0 & 2 & 1 \\
        0 & 1 & 1 & 0 \\
        0 & 0 & 0 & 2
    \end{pmatrix}$$
    (see Figure \ref{fig:Example3}).
    The sets of nullity $1$ are $C=C^+\sqcup C^-=\{1,2\}\sqcup \{3\}$ and $X=C\sqcup F=\{1,2,3\}\sqcup \{4\}$.

    Let $W$ be the layer $H_1\cap H_2\cap H_3$. If $b>1$, applying formula \eqref{eq:nonuni1} to the central circuit $C$ we obtain the relation
    \begin{equation}\label{NCNU1}
        \omega_{W,12}+(-1)^d\omega_{W,23}-\omega_{W,13}+\frac{1}{2^a}\omega_3\psi_2=0.
    \end{equation}
    Notice that in the case $a=1$ the unimodular covering of $\mathcal{A}$ is the arrangement in Example \ref{ex: NCU}, and the pullback of eq.\ \eqref{NCNU1} is the sum of equations \eqref{CU1} and \eqref{NCU}.

    Now consider a set $X$ of nullity $1$, we get $2^a$ relations because the intersection $H_1\cap H_2\cap H_3 \cap H_4=W\cap H_4$ has $2^a$ distinct connected components, $p_1, \cdots, p_{2^a}$ and for any $i=1,\cdots, 2^a$:
    \begin{equation}\label{NCNU2}
        \omega_{p_i,124}+(-1)^d\omega_{p_i,234}-\omega_{p_i,134}-\frac{1}{4^a}\omega_{Z,34}\psi_2=0,
    \end{equation}
    where $Z$ is the layer $H_3\cap H_4$.
    The Poincaré polynomial of the complement is
    $(t+1)^{3a}+4(t+1)^{2a}+7(t+1)^a+6$.
\end{example}

\begin{definition}
    Let $R$ be the free $H^*(G^r)$-module generated by the classes $\omega_{W,A}\in H^*(M(\mathcal{A}))$ with $A\subseteq E$ independent set and $W$ connected component of $\bigcap_{i\in A}H_i$. We endow $R$ with a ring structure by defining the multiplication
    \begin{equation}
    \label{eq:def_prod_R}
        \omega_{W,A}\omega_{W',A'}=\begin{cases}
            0 \quad \text{ if } A,A' \text{ are not disjoint or } A\sqcup A' \text{ dependent,}\\
            (-1)^{d\ell(A, A')}\sum_{L \text{ c.c. of } W\cap W'}\omega_{L,A\sqcup A'} \quad \text{ otherwise.}
        \end{cases}
    \end{equation}
\end{definition}

    Notice that, for any $A,B\subseteq E$ and for any connected component $W$ of $\bigcap_{i\in A}H_i$, the element $\eta_{W,A,B}\in R$ is defined as $\omega_{W,A}\psi_B$.

\begin{theorem} \label{thm: main}
Let $\mathcal{A}$ be an arrangement in $G^r$, where $G=\mathbb{R}^b\times (S^1)^a$. The integer cohomology of the complement $H^*(M(\mathcal{A});\mathbb{Z})$ is the quotient of $R$ by  the following relations:
\begin{itemize}   
    \item 
    %Whenever $A\sqcup B$ and $A'\sqcup B'$ are not disjoint or $A\sqcup B\sqcup A'\sqcup B'$ is dependent, 
    %\begin{align}\label{eq: prod1}
    %    \eta_{W,A,B}\eta_{W',A',B'}=0
    %\end{align}
    For any independent set $A\subseteq E$ and any connect component $W$ of $\cap_{i \in A} H_i$ and any $z \in \ker \left( H^*(G^r;\mathbb{Z}) \to H^*(W;\mathbb{Z}) \right)$,
    \begin{align}\label{eq: prod1}
       \omega_{W,A}z=0
    \end{align}
  
    \item For any central set $X=C\sqcup F$ of nullity $1$ with associated signed circuit $C=C^+\sqcup C^-$, and for any connected component $Y$ of $\bigcap_{i\in X}H_i$,
    \begin{itemize}
        \item If $b=1$,
        \begin{align}\label{eq: circuit1}
        \begin{split}
        \sum_{\substack{K\subseteq C^- \\ K\neq\emptyset}} & (-1)^{|K|} c_{i_K}^d \frac{m(X\setminus K)^a}{m(X\setminus i_K)^a}\eta_{W,X\setminus K,K\setminus i_K}\\
        &-\sum_{\substack{K\subseteq C^+\\ K\neq \emptyset}}(-1)^{|K|}c_{i_K}^d\frac{m(X\setminus K)^a}{ m(X\setminus i_K)^a}\eta_{W,X\setminus K, K\setminus i_K}=0,
        \end{split}
        \end{align}
        for some $i_K\in K$ and where, for each summand, $W$ is the connected component of $\bigcap_{i \in X\setminus K}H_i$ such that $Y\subseteq W$.
        \item If $b>1$ and $d$ odd,
        \begin{align}\label{eq: circuit2}
        \sum_{i\in C}(-1)^{|C_{<i}|}%m(X\setminus i)^{a-1}
       \eta_{Y,X\setminus i,\emptyset}=0.
        \end{align}
        \item If $b>1$ and $d$ even,
        \begin{align}\label{eq: circuit3}
        \sum_{i\in C^-}\eta_{Y,X\setminus i,\emptyset}-\sum_{i\in C^+}\eta_{Y,X\setminus i,\emptyset}=0.
        \end{align}
    \end{itemize}
\end{itemize}
\end{theorem}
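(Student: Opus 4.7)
The plan is to construct a natural map $\phi\colon \widetilde{R}\to H^*(M(\mathcal{A}))$, where $\widetilde{R}$ denotes the quotient of $R$ by the relations \eqref{eq: prod1} and \eqref{eq: circuit1}--\eqref{eq: circuit3}, sending each generator $\omega_{W,A}\in R$ to the cohomology class of the same name. Well-definedness reduces to checking that the listed relations hold in $H^*(M(\mathcal{A}))$: the product relation \eqref{eq: prod1} follows from $\omega_i^2=0$ together with \Cref{lemma: rel1} (which handles the dependent case), and the circuit relations \eqref{eq: circuit1}--\eqref{eq: circuit3} are exactly the content of \Cref{lemma: circuitrel}. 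Surjectivity of $\phi$ is immediate from \Cref{lem: gen}, since the $\omega_{W,A}$ together with $H^*(G^r)$ generate the target as a ring.

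For injectivity I would argue by induction on $|E|$ via deletion-restriction. Fix $H_i\in\mathcal{A}$ and let $\mathcal{A}'=\mathcal{A}\setminus\{H_i\}$, $\mathcal{A}''$ be its restriction. At the presentation level, define $\widetilde{\alpha}\colon \widetilde{R}(\mathcal{A}')\to \widetilde{R}(\mathcal{A})$ by inclusion of generators, and $\widetilde{\beta}\colon \widetilde{R}(\mathcal{A})\to \widetilde{R}(\mathcal{A}'')$ by $\omega_{W,A}\mapsto \omega''_{W\cap H_i,\,A\setminus\{i\}}$ when $i\in A$ and $0$ otherwise, extended $H^*(G^r)$-linearly via the pullback $H^*(G^r)\twoheadrightarrow H^*(G^{r-1})$. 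Verifying that $\widetilde{\alpha}$ and $\widetilde{\beta}$ descend to the quotients amounts to matching each relation in the source with a relation in the target; the key point is that every generalized circuit of $\mathcal{A}''$ lifts to one of $\mathcal{A}$ through $H_i$, and every generalized circuit of $\mathcal{A}$ not involving $i$ is one of $\mathcal{A}'$. One then proves that
\[0\to \widetilde{R}(\mathcal{A}')\xrightarrow{\widetilde{\alpha}}\widetilde{R}(\mathcal{A})\xrightarrow{\widetilde{\beta}}\widetilde{R}(\mathcal{A}'')\to 0\]
is exact, and combined with the cohomological short exact sequence of \Cref{cor: ses} and the naturality of $\phi$, the five-lemma provides the inductive step.

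As a sanity check and alternative route to injectivity, one can appeal directly to \Cref{cor: gen}: using the circuit relations to rewrite every $\omega_{W,A}$ with $A$ containing a broken circuit, one reduces to a spanning set of $\widetilde{R}$ over $H^*(G^r)$ indexed by NBC independent sets together with choices of connected components, and a standard combinatorial identity identifies its generating function with $P_{\mathcal{A}}(t)$, forcing $\phi$ to be an isomorphism.

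The main obstacle is the middle-exactness $\ker\widetilde{\beta}\subseteq \im\widetilde{\alpha}$, which requires tracking how the "new" circuit relations of $\mathcal{A}$ (those generated by circuits containing $i$) project under $\widetilde{\beta}$ to the circuit relations of $\mathcal{A}''$. The signs $c_{i_K}^d$, the multiplicity factors $m(X\setminus K)^a/m(X\setminus i_K)^a$, and the decomposition of each layer into connected components must all be shown to behave consistently with the arithmetic matroid of the contraction; this is the analogue of the step carried out in \cite{CDDMP2020} in the toric case, and the adaptation to the mixed signature $(a,b)$ is the technical heart of the argument.
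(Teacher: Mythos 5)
Your proposal follows essentially the same route as the paper: a deletion--restriction induction comparing the presentation short exact sequence for $R/I$, $R'/I'$, $R''/I''$ against the cohomological exact sequence of \Cref{cor: ses}, with surjectivity from \Cref{lem: gen}, the relations from \Cref{lemma: rel1} and \Cref{lemma: circuitrel}, and injectivity extracted by a diagram chase. The one bookkeeping point where the paper is crisper than your sketch of the middle-exactness step is that it does not try to show exactness of the top row abstractly, but instead observes that multiplying any circuit relation of type \eqref{eq: circuit1} by a generator $\eta_{W,A,B}$ produces a $\mathbb{Z}$-linear combination of relations of types \eqref{eq: prod1} and \eqref{eq: circuit1}, so the ideal $I$ is generated as a $\mathbb{Z}$-module by the listed relations; this makes the surjection $g|_I\colon I\twoheadrightarrow I''$ immediate and lets the chase close via the exactness of the underlying free-module sequence $0\to R'\to R\to R''\to 0$.
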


\begin{remark}
    Its easy to see that eq.\ \eqref{eq: prod1} implies $\omega_{W,A}\psi_B^j=0$ for all $j=1, \dots, a$ and all $B \subseteq E$ dependent on $A$.
    The viceversa, for essential arrangements, holds only under the assumption that $\AAA$ is unimodular or after tensoring with $\mathbb{Q}$.
\end{remark}
\begin{proof}[of \Cref{thm: main}]
    Consider the map $p\colon R \to H^*(M(\AAA);\mathbb{Z})$ given by the pullback of $M(\AAA) \to G^r$ and that sends the formal generators $\omega_{W,A}$ to the cohomology classes $\omega_{W,A}$.
    The map $p$ is a ring homomorphism because the product in $R$ (defined in eq.\ \eqref{eq:def_prod_R}) coincides with the product in cohomology (see Lemma \ref{lemma:prod_omega_in_cohomology}).
    %We need to check that $p$ is a ring homomorphism, or equivalently that eq.\ \eqref{eq:def_prod_R} hold in $H^*(M(\AAA);\mathbb{Z})$.
    %Let $\mathcal{B}$ be the essentialization of the arrangement $\{ H_i \mid i \in A\cup A'\}$.
    %We will prove the relations in $H^*(M(\mathcal{B});\mathbb{Z})$: if $A \cap A' \neq \emptyset$ or $A \cup A'$ is dependent, then the product $\omega$ INCOMPLETA
    
    Let $I$ be the ideal generated by relations \eqref{eq: prod1} and \eqref{eq: circuit1} (or \eqref{eq: circuit2} or \eqref{eq: circuit3} depending on $b$ and $d$) and $J$ be the ideal generated by $\eqref{eq: prod1}$ only. 
    The ideal of relations $I$ is contained in $\ker p$: indeed, Relation \eqref{eq: prod1} holds by Lemma \ref{lemma: rel1} and Relations \eqref{eq: circuit1}, \eqref{eq: circuit2}, \eqref{eq: circuit3} by Lemma \ref{lemma: circuitrel}.

    We prove that $p$ induces an isomorphism $R/I \to H^*(M(\AAA);\mathbb{Z})$ by induction on the cardinality of $\mathcal{A}$. The base case $\mathcal{A}= \emptyset$ is trivial. 
    
    Let $S= R/J$ and fix $i\in E$. Let $S'$,  $S''$, $I'$, $I''$ 
    %and so on 
    be the analog objects with respect to the deletion $\mathcal{A}'$ and the contraction $\mathcal{A}''$ of the subvariety $H_i$.
    Consider the short exact sequence
    \[ 0 \to S' \to S \xrightarrow[]{g} S'' \to 0\]
    given by the inclusion and the residue map $g$ defined by 
    \[
    g(\omega_{W,A}z) = \begin{cases}
    \omega_{W, A\setminus i}\bar{z}, &\text{ if } i\in A,\\
    0 &\text{ otherwise,}
    \end{cases}
    \]
    for any $z\in H^*(G^r)$, where $\bar{z}$ is the image of $z$ through the restriction map $H^*(G^r)\rightarrow H^*(H_i)$.
    Checking that the maps are well defined and the sequence is exact is a routine exercise, that can be carried out using the key fact that $J$ is generated by relations $\eqref{eq: prod1}$ as a $\mathbb{Z}$-module.

    With an abuse of notation, we call $I$ the image of the ideal $I$ in the quotient ring $S$.
    Consider the diagram
    \begin{center}
    \begin{tikzcd}
             & 0 \arrow[d] & 0 \arrow[d] & 0 \arrow[d]& \\
        & I' \arrow[r] \arrow[d] & \ker p \arrow[r] \arrow[d]       & I'' \arrow[d]      &   \\
        0 \arrow[r] & S' \arrow[r] \arrow[d, "p'"] & S \arrow[r, "g"] \arrow[d, "p"]       & S'' \arrow[d, "p''"]      \arrow[r] & 0   \\
        0 \arrow[r] & H^*(M(\mathcal{A'})) \arrow[r, "j^{*}"]  \arrow[d]    & H^*(M(\mathcal{A})) \arrow[r, "\res"]  & H^{*-(a+b-1)}(M(\mathcal{A''})) \arrow[r] \arrow[d] & 0  \\
         & 0  &  & 0 & 
    \end{tikzcd}  
    \end{center}
    
    By induction hypothesis the first and last columns are exact. The second row is exact by the previous considerations and the last one by Corollary \ref{cor: ses}.
    The left square of diagram commutes because $j^*(\eta'_{W,A,B})=\eta_{W,A,B}$, as the right one, since $\res(\eta_{W,A,B})=\eta_{W,A\setminus\{i\},B}^{''}$ if $i \in A$ and $\res(\eta_{W,A,B})=0$ otherwise.

    The Snake Lemma implies that the map $p$ is surjective, and that 
    \[ 0 \to I' \to \ker p \to I'' \to 0 \]
    is exact.

    Now we prove $\ker p = I$, we already noticed that $\ker p \supseteq I$.
    Regarding the other inclusion, observe that for any relation $r$ of type \eqref{eq: circuit1} and any generator $\eta_{W,A,B}$ the element $r\eta_{W,A,B}$ is linear combination of relations of types \eqref{eq: prod1} and \eqref{eq: circuit1}.
    In particular, the ideal $I$ is generated by \eqref{eq: prod1} and \eqref{eq: circuit1} as $\mathbb{Z}$-module.
    It follows that the map $g_{|I} \colon I\twoheadrightarrow I''$ is surjective.
    Moreover, $I' \subseteq I$ and this implies $\ker p \subseteq I$.

    This completes the inductive step and concludes the proof.
\end{proof}

\section{Relation with \texorpdfstring{\protect\cite{CDDMP2020}}{CDD+20}}
\label{sec:6}
    Our presentation, specialized to toric arrangements, is similar to the one obtained in \cite{CDDMP2020}.
    The integral cohomology ring was studied in \cite[Section 7]{CDDMP2020} without providing an explicit presentation, but showing that the cohomology is generated by the differential forms $\frac{m(A)}{m(A\cup B)}\eta_{W,A,B}$.
    Those cohomological classes $\eta_{W,A,B}$ correspond to our classes $\eta_{W,A,B}$; recall that they depend on the choice of an orientation of each hypertorus.

    The rational cohomology was presented in \cite[Theorem 6.13]{CDDMP2020} using the differential forms $\overline{\eta}_{W,A,B}$.
    That differential forms are an average of $\eta_{W,A,B}$ over all possible orientation choice
    \begin{equation}
    \label{eq:rel_etabar_eta}
    \overline{\eta}_{W,A,B} = \sum_{D \subseteq A} (-1)^{\lvert D \rvert} 2^{\lvert A \setminus D \rvert} \frac{m(A \setminus D)}{m(A)} \eta_{W(A\setminus D), A \setminus D, B \cup D}.    
    \end{equation}
    Therefore, their eq.\ (28) in \cite[Theorem 6.13]{CDDMP2020} corresponds to our multiplication rule in $R$ eq.\ \eqref{eq:def_prod_R} and to eq.\ \eqref{eq: prod1}, and their eq.\ (29) holds in $H^*(G^r) \subset R$.
    The eq.\ (30) is equivalent to eq.\ \eqref{eq: circuit1} in a very intricate way that we are going to explain.

    Consider $X$ a subset of nullity $1$, $C=C^+ \sqcup C^-$ be the  oriented circuit in $X$ and $Y$ a connected component of $\cap_{i \in X} H_i$.
    We substitute eq.\ \eqref{eq:rel_etabar_eta} in their eq.\ (30):
    \begin{align*}
         (30) &= \sum_{i \in C} \sum_{\substack{B \subseteq C \setminus \{i\} \\ \lvert B \rvert \text{ even}}} (-1)^{\lvert A_{<i} \rvert + \lvert B \cap C^- \rvert} \frac{m(A)}{m(X\setminus \{i\})} \overline{\eta}_{W(A),A,B}\\
        &= \sum_{i \in C} \sum_{\substack{B \subseteq C \setminus \{i\} \\ \lvert B \rvert \text{ even}}} \sum_{D \subseteq A} (-1)^{\lvert A_{<i} \rvert + \lvert B \cap C^- \rvert+\lvert D \rvert} 2^{\lvert A \setminus D \rvert} \frac{m(A\setminus D)}{m(X\setminus \{i\})} \eta_{W(A\setminus D), A \setminus D, B \cup D},
    \end{align*}
    where $A= X \setminus (B \cup \{i\})$ and $W(A)$ is the unique connected component of $\cap_{i \in A} H_i$ containing $Y$.

    Let $E=A \setminus D$ and notice that for any $i,j \in C$ we have
    \begin{equation}
    \label{eq:rel_etai_etaj}
        c_i \frac{\eta_{W(E),E,X \setminus (E \cup \{i\})}}{m(X \setminus \{i\})} = c_j \frac{\eta_{W(E),E,X \setminus (E \cup \{j\})}}{m(X \setminus \{j\})}.
    \end{equation}
    Substituting eq.\ \eqref{eq:rel_etai_etaj} we have
    \begin{align*}
        &(30)= \sum_{\substack{E \subset X}} \sum_{\substack{B \subseteq C \setminus E \\ \lvert B \rvert \text{ even}}} \sum_{i \in C \setminus (E \cup B)} (-1)^{\lvert A_{<i} \rvert + \lvert B \cap C^- \rvert+\lvert D \rvert} 2^{\lvert E \rvert} \frac{m(E)}{m(X\setminus \{i\})} \eta_{W(E), E, X \setminus (E \cup \{i\})} \\
        &= \sum_{E \subset X}  (-1)^{\lvert X \setminus E \rvert-1} 2^{\lvert E \rvert} \frac{m(E) c_j \eta_{W(E), E, X \setminus (E \cup \{j\})}}{m(X\setminus \{j\})} 
        \sum_{\substack{B \subseteq C \setminus E \\ \lvert B \rvert \text{ even}}} \sum_{i \in C \setminus (E \cup B)} (-1)^{\lvert (E \sqcup D)_{<i} \rvert + \lvert B \cap C^- \rvert}c_i,
    \end{align*}
    where $j$ is any element in $X \setminus E$.
    Notice that $c_i= (-1)^{\lvert X_{<i} \rvert + \lvert \{i\} \cap C^- \rvert} $ and we claim that
    \begin{equation}
        \label{eq:sum_sign_equal_power_of_two}
        \sum_{\substack{B \subseteq C \setminus E \\ \lvert B \rvert \text{ even}}} \sum_{i \in C \setminus (E \cup B)} (-1)^{\lvert B_{<i} \rvert -1 + \lvert (B\cup \{i\}) \cap C^- \rvert } = \begin{cases}
            -2^{\lvert C \setminus E \rvert -1} & \text{if } \emptyset \neq C\setminus E \subseteq C^+, \\
            2^{\lvert C \setminus E \rvert -1} & \text{if } \emptyset \neq C\setminus E \subseteq C^-, \\
            0 & \text{otherwise.}
        \end{cases}
    \end{equation}
    The left hand side of eq.\ \eqref{eq:sum_sign_equal_power_of_two} is equal to
    \[ \sum_{\substack{\tilde{B} \subseteq C \setminus E \\ \lvert \tilde{B} \rvert \text{ odd}}} (-1)^{\lvert \tilde{B} \cap C^- \rvert-1} \]
    where $\tilde{B}= B \sqcup \{i\}$, and so proving eq.\ \eqref{eq:sum_sign_equal_power_of_two} becomes an exercise left to the reader.

    Finally, we have
    \begin{align*}
        \begin{split}
            (30) = &\sum_{\substack{E \subset X\\ \emptyset \neq  C \setminus E \subseteq C^-}}  (-1)^{\lvert X \setminus E \rvert-1} 2^{\lvert E \cup C\rvert-1} \frac{m(E) c_j \eta_{W(E), E, X \setminus (E \cup \{j\})}}{m(X\setminus \{j\})} \quad \\ 
            &- \sum_{\substack{E \subset X\\ \emptyset \neq  C \setminus E \subseteq C^+}}  (-1)^{\lvert X \setminus E \rvert-1} 2^{\lvert E \cup C\rvert-1} \frac{m(E) c_j \eta_{W(E), E, X \setminus (E \cup \{j\})}}{m(X\setminus \{j\})}
        \end{split} \\
        =& \sum_{F\subseteq X \setminus C} (-1)^{\lvert F \rvert} 2^{\lvert X\setminus F\rvert + \ell (F,X\setminus F)}\frac{m(X\setminus F)}{m(X)} \eta_{T,\emptyset,F} r(W(X\setminus F), X\setminus F),
    \end{align*}
    where $X=F \sqcup (C\cup E)$ and we denote by $r(W(Z), Z)$ the relation \eqref{eq: circuit1} for the subset $Z$ of nullity $1$ and the connected component $W(Z)$.

    This provides an alternative proof of \cite[Theorem 6.13]{CDDMP2020}.

\section{Configuration spaces}\label{Sec: conf}

In this section, we apply the main result of this article to the cohomology ring of ordered configuration spaces  in $\mathbb{R}^b \times (S^1)^a$.

Let $X$ be a topological space, the configuration space of $n$ points in $X$ is:
$$\conf_n(X)=\{(x_1, \dots, x_n)\in X^n \mid x_i\neq x_j \, \forall i\neq j\}.$$
As above, let $G=\mathbb{R}^b\times (S^1)^a$, and consider the totally unimodular and central arrangement 
$$\mathcal{A}_n = \{ H_{ij} \}_{\substack{i,j \in [n] \\ i<j}}, \quad H_{ij} = (\chi_i - \chi_j)^{(-1)} (e),$$
where $\chi_i \colon G^n \rightarrow G$ is the projection on the $i$-th component. Notice that
\[\conf_n(G) = M^{a,b}(\mathcal{A}_n).\]

Cohen and Taylor \cite{CohenTaylor1978} used a spectral sequence to compute the cohomology of configuration spaces in $\mathbb{R}^b \times M$, for $b\geq 1$ and $M$ a connected manifold (see \cite[Example 1]{CohenTaylor1978}).
More precisely, there exists a filtration $F_*$ on $H^*(\conf_n(\mathbb{R}^b \times M); \mathbb{K})$ for a field $\mathbb{K}$ and they described explicitly the associated graded 
\[ \gr_{F_*}H^*(\conf_n(\mathbb{R}^b \times M); \mathbb{K})\]
as a ring. In the case $M=(S^1)^a$ and $b>1$, \Cref{thm: main} implies that 
\[ \gr_{F_*}H^*(\conf_n(G); \mathbb{K}) \simeq H^*(\conf_n(G); \mathbb{K})\]
as ring.
In the case $b=1$, we have the opposite behaviour: indeed, the two rings $\gr_{F_*}H^*(\conf_n(G; \mathbb{K}))$ and $H^*(\conf_n(G); \mathbb{K})$ are not canonically isomorphic.
Finally, our result extend the coefficients from fields to integers.

Now, we write down the presentation of $H^*(\conf_n(G); \mathbb{Z})$ by using \Cref{thm: main}.
The ground set of the associated matroid is the set $E=\{ij | i,j\in [n], i< j\}$ with the lexicographic order. The poset of layers is $\mathcal{L}(\mathcal{A}_n)=\Pi_{[n]}$ the poset of partitions of $[ n]$. The circuits of the arrangement are of the form $\{ij, ik, jk\}$ with $i,j,k\in [n]$ and $i<j<k$.

By Theorem \ref{thm: main}, the cohomology of the configuration space is generated as a $H^*(G^n)$-module by the classes $\omega_{ij} \in H^d(\conf_n(G))$ with $ij \in E$.\\
Relations \eqref{eq: prod1}-\eqref{eq: circuit2} become
\begin{itemize}
    \item $\omega_{ij}\psi_{ij}=0$ for all $ ij \in E$;
    \item for $b=1$, \begin{equation}\label{eq: arnold_corretta}
        \omega_{ij}\omega_{jk}-\omega_{ij}\omega_{ik}+\omega_{jk}\omega_{ik} -\psi_{ij}\omega_{ik}=0;
    \end{equation}
    \item for $b>1$, 
    \begin{equation}
        \label{eq: arnold}
        \omega_{ij}\omega_{jk}-\omega_{ij}\omega_{ik}+\omega_{jk}\omega_{ik}=0.
    \end{equation}
\end{itemize}

Recall that in $H^*(G^n)$, $\psi_{ik}=\psi_{ij}+\psi_{jk}$ and that $\omega_{ij}^2=0$ by the product rule. 
Moreover the relation $\omega_{ij}\omega_{jk}\omega_{ik}=0$ holds multiplying equation \eqref{eq: arnold_corretta} (resp.\ eq.\ \eqref{eq: arnold})  by $\omega_{ik}$.  
In the case $b=1$, correction terms appear in our formulas. In the article \cite{CohenTaylor1978} and in this one, the element $\omega_{ij}$ represent the class of the subvariety $H_{ij}$ and hence there is no canonical morphism between the two algebras $\gr_{F_*}H^*(\conf_n(G; \mathbb{K}))$ and $H^*(\conf_n(G); \mathbb{K})$ for $n \geq 3$.

\section*{Acknowledgements}
R.\ Pagaria and M.\ Pismataro are partially supported by PRIN ALgebraic and TOPological combinatorics (ALTOP) n° 2022A7L229.
The authors are member of the INDAM group GNSAGA, that has partially supported R.\ Pagaria and M.\ Pismataro.

\bibliographystyle{amsalpha}
\bibliography{bib}

\newcommand{\etalchar}[1]{$^{#1}$}
\providecommand{\bysame}{\leavevmode\hbox to3em{\hrulefill}\thinspace}
\providecommand{\MR}{\relax\ifhmode\unskip\space\fi MR }
% \MRhref is called by the amsart/book/proc definition of \MR.
\providecommand{\MRhref}[2]{%
  \href{http://www.ams.org/mathscinet-getitem?mr=#1}{#2}
}
\providecommand{\href}[2]{#2}
\begin{thebibliography}{BLVS{\etalchar{+}}99}

\bibitem[And25]{AndersonOrientedMatroids}
Laura Anderson, \emph{Oriented matroids}, Cambridge Studies in Advanced
  Mathematics, vol. 216, Cambridge University Press, Cambridge, 2025.
  \MR{4880415}

\bibitem[Arn69]{Arnold69}
Vladimir~I. Arnold, \emph{The cohomology ring of the group of dyed braids},
  Mat. Zametki \textbf{5} (1969), 227--231. \MR{242196}

\bibitem[Bib16]{Bibby}
Christin Bibby, \emph{Cohomology of abelian arrangements}, Proc. Amer. Math.
  Soc. \textbf{144} (2016), no.~7, 3093--3104. \MR{3487239}

\bibitem[BLVS{\etalchar{+}}99]{OrientedMatroids}
Anders Bj\"{o}rner, Michel Las~Vergnas, Bernd Sturmfels, Neil White, and
  G\"{u}nter~M. Ziegler, \emph{Oriented matroids}, second ed., Encyclopedia of
  Mathematics and its Applications, vol.~46, Cambridge University Press,
  Cambridge, 1999. \MR{1744046}

\bibitem[BM14]{BradenMoci}
Petter Br\"and\'en and Luca Moci, \emph{The multivariate arithmetic {T}utte
  polynomial}, Trans. Amer. Math. Soc. \textbf{366} (2014), no.~10, 5523--5540.
  \MR{3240933}

\bibitem[Bri72]{Brieskorn1971-1972}
Egbert Brieskorn, \emph{Sur les groupes de tresses}, Séminaire Bourbaki
  \textbf{14} (1971-1972), 21--44 (fre).

\bibitem[CD17]{CD17}
Filippo Callegaro and Emanuele Delucchi, \emph{The integer cohomology algebra
  of toric arrangements}, Adv. Math. \textbf{313} (2017), 746--802.
  \MR{3649237}

\bibitem[CDD{\etalchar{+}}20]{CDDMP2020}
Filippo Callegaro, Michele D'Adderio, Emanuele Delucchi, Luca Migliorini, and
  Roberto Pagaria, \emph{Orlik-{S}olomon type presentations for the cohomology
  algebra of toric arrangements}, Trans. Amer. Math. Soc. \textbf{373} (2020),
  no.~3, 1909--1940. \MR{4068285}

\bibitem[CLM76]{CLM76}
Frederick~R. Cohen, Thomas~J. Lada, and J.~Peter May, \emph{The homology of
  iterated loop spaces}, Lecture Notes in Mathematics, vol. Vol. 533,
  Springer-Verlag, Berlin-New York, 1976. \MR{436146}

\bibitem[Cor02]{Cordovil02}
Raul Cordovil, \emph{A commutative algebra for oriented matroids}, Geometric
  combinatorics (San Francisco, CA/Davis, CA, 2000), vol.~27, 2002, pp.~73--84.
  \MR{1871690}

\bibitem[CT78]{CohenTaylor1978}
Frederick~R. Cohen and Laurence Taylor, \emph{Computations of {G}elfand-{F}uks
  cohomology, the cohomology of function spaces, and the cohomology of
  configuration spaces}, Geometric applications of homotopy theory ({P}roc.
  {C}onf., {E}vanston, {I}ll., 1977), {I}, Lecture Notes in Math., vol. 657,
  Springer, Berlin, 1978, pp.~106--143. \MR{513543}

\bibitem[DB23]{DorpalenBarry23}
Galen Dorpalen-Barry, \emph{The {V}archenko-{G}el'fand ring of a cone}, J.
  Algebra \textbf{617} (2023), 500--521. \MR{4523451}

\bibitem[DBPW22]{DBPW22}
Galen Dorpalen-Barry, Nicholas Proudfoot, and Jidong Wang, \emph{Equivariant
  cohomology and conditional oriented matroids}, 2022.

\bibitem[DCG18]{DCG18}
Corrado De~Concini and Giovanni Gaiffi, \emph{Projective wonderful models for
  toric arrangements}, Adv. Math. \textbf{327} (2018), 390--409. \MR{3761997}

\bibitem[DCG19]{DCG19}
\bysame, \emph{Cohomology rings of compactifications of toric arrangements},
  Algebr. Geom. Topol. \textbf{19} (2019), no.~1, 503--532. \MR{3910589}

\bibitem[DCG21]{DCG21}
\bysame, \emph{A differential algebra and the homotopy type of the complement
  of a toric arrangement}, Atti Accad. Naz. Lincei Rend. Lincei Mat. Appl.
  \textbf{32} (2021), no.~1, 1--21. \MR{4251238}

\bibitem[DCGP20]{DCGP20}
Corrado De~Concini, Giovanni Gaiffi, and Oscar Papini, \emph{On projective
  wonderful models for toric arrangements and their cohomology}, Eur. J. Math.
  \textbf{6} (2020), no.~3, 790--816. \MR{4151719}

\bibitem[DCP95]{DCP95}
Corrado De~Concini and Claudio Procesi, \emph{Wonderful models of subspace
  arrangements}, Selecta Math. (N.S.) \textbf{1} (1995), no.~3, 459--494.
  \MR{1366622}

\bibitem[DCP05]{DeConciniProcesi2005}
\bysame, \emph{On the geometry of toric arrangements}, Transform. Groups
  \textbf{10} (2005), no.~3-4, 387--422. \MR{2183118}

\bibitem[Del72]{Deligne1972}
Pierre Deligne, \emph{Les immeubles des groupes de tresses généralisés.},
  Inventiones mathematicae \textbf{17} (1972), 273--302.

\bibitem[dLS01]{LonguevilleSchultz}
Mark de~Longueville and Carsten~A. Schultz, \emph{The cohomology rings of
  complements of subspace arrangements}, Math. Ann. \textbf{319} (2001), no.~4,
  625--646. \MR{1825401}

\bibitem[DM13]{DAdderioMoci}
Michele D'Adderio and Luca Moci, \emph{Arithmetic matroids, the {T}utte
  polynomial and toric arrangements}, Adv. Math. \textbf{232} (2013), 335--367.
  \MR{2989987}

\bibitem[FZ00]{FeichtnerZiegler}
Eva~Maria Feichtner and G\"{u}nter~M. Ziegler, \emph{On cohomology algebras of
  complex subspace arrangements}, Trans. Amer. Math. Soc. \textbf{352} (2000),
  no.~8, 3523--3555. \MR{1694288}

\bibitem[GM12]{goresky}
Mark Goresky and Robert MacPherson, \emph{Stratified morse theory}, Ergebnisse
  der Mathematik und ihrer Grenzgebiete. 3. Folge / A Series of Modern Surveys
  in Mathematics, Springer Berlin Heidelberg, 2012.

\bibitem[GPS24]{GPS2024}
Lorenzo Giordani, Roberto Pagaria, and Viola Siconolfi, \emph{Cohomology rings
  of toric wonderful model}, 2024.

\bibitem[GR89]{GR89}
Izrail'~Moiseevič Gel'fand and Grigory~L. Rybnikov, \emph{Algebraic and
  topological invariants of oriented matroids}, Dokl. Akad. Nauk SSSR
  \textbf{307} (1989), no.~4, 791--795. \MR{1020668}

\bibitem[HR96]{HughesRanickiBook}
Bruce Hughes and Andrew Ranicki, \emph{Ends of complexes}, Cambridge Tracts in
  Mathematics, vol. 123, Cambridge University Press, Cambridge, 1996.
  \MR{1410261}

\bibitem[Loo93]{Looijenga}
Eduard Looijenga, \emph{Cohomology of {${\mathscr M}_3$} and {${\mathscr
  M}^1_3$}}, Mapping class groups and moduli spaces of {R}iemann surfaces
  ({G}\"ottingen, 1991/{S}eattle, {WA}, 1991), Contemp. Math., vol. 150, Amer.
  Math. Soc., Providence, RI, 1993, pp.~205--228. \MR{1234266}

\bibitem[LTY21]{LiuTranYoshinaga2021}
Ye~Liu, Tan~Nhat Tran, and Masahiko Yoshinaga, \emph{{$G$}-{T}utte polynomials
  and abelian {L}ie group arrangements}, Int. Math. Res. Not. IMRN (2021),
  no.~1, 152--190. \MR{4198494}

\bibitem[Mos17]{Moseley17}
Daniel Moseley, \emph{Equivariant cohomology and the {V}archenko-{G}elfand
  filtration}, J. Algebra \textbf{472} (2017), 95--114. \MR{3584871}

\bibitem[MP22]{MociPagaria}
Luca Moci and Roberto Pagaria, \emph{On the cohomology of arrangements of
  subtori}, J. Lond. Math. Soc. (2) \textbf{106} (2022), no.~3, 1999--2029.
  \MR{4498547}

\bibitem[OS80]{OrlikSolomon1980}
Peter Orlik and Louis Solomon, \emph{Combinatorics and topology of complements
  of hyperplanes}, Invent. Math. \textbf{56} (1980), no.~2, 167--189.
  \MR{558866}

\bibitem[Oxl11]{OxleyMatroidTheory}
James Oxley, \emph{Matroid theory}, second ed., Oxford Graduate Texts in
  Mathematics, vol.~21, Oxford University Press, Oxford, 2011. \MR{2849819}

\bibitem[Pag20]{OrientableArithmeticMatroids}
Roberto Pagaria, \emph{Orientable arithmetic matroids}, Discrete Math.
  \textbf{343} (2020), no.~6, 111872, 8. \MR{4067961}

\bibitem[PP21]{PagariaPaolini}
Roberto Pagaria and Giovanni Paolini, \emph{Representations of torsion-free
  arithmetic matroids}, European J. Combin. \textbf{93} (2021), Paper No.
  103272, 17. \MR{4186616}

\bibitem[Pro06]{Proudfoot06}
Nicholas Proudfoot, \emph{The equivariant {O}rlik-{S}olomon algebra}, J.
  Algebra \textbf{305} (2006), no.~2, 1186--1196. \MR{2266876}

\bibitem[VG87]{VarchenkoGelfand87}
Alexander~N. Varchenko and Izrail'~Moiseevič Gel'fand, \emph{Heaviside
  functions of a configuration of hyperplanes}, Funktsional. Anal. i Prilozhen.
  \textbf{21} (1987), no.~4, 1--18, 96. \MR{925069}

\end{thebibliography}
\end{document}